\PassOptionsToPackage{backref=page}{hyperref}
\documentclass[12pt,letterpaper,reqno]{amsart}

\usepackage[T1]{fontenc}
\usepackage{amsmath,amssymb,amsfonts,amsthm}
\usepackage{mathtools}
\usepackage{aliascnt}
\usepackage{microtype}
\usepackage{enumitem}
\usepackage{booktabs}
\usepackage{xcolor}
\usepackage{doi}
\usepackage{hyperref}
\usepackage{tikz}
\usetikzlibrary{positioning,arrows.meta}
\usepackage{bookmark}
\usepackage[capitalize,noabbrev]{cleveref}

\hypersetup{
  pdftitle={Tilting Completion and Full-Rank Self-Orthogonal Modules},
  pdfauthor={Wen Chang and Quanyu Tang},
  pdfsubject={Tilting modules, tilting completion, and presilting complexes},
  pdfkeywords={tilting module, pretilting module, tilting completion, presilting complex, quasi-hereditary algebra, one-point extension},
  pdfstartview={FitH},
  colorlinks=true,
  linkcolor=blue!55!black,
  citecolor=green!35!black,
  urlcolor=blue!55!black
}
\renewcommand*{\backref}[1]{}
\renewcommand*{\backrefalt}[4]{%
  \ifcase #1
  \or
    \quad #2%
  \else
    \quad #2%
  \fi
}
\newtheorem{theorem}{Theorem}[section]

\newaliascnt{lemma}{theorem}
\newtheorem{lemma}[lemma]{Lemma}
\aliascntresetthe{lemma}

\newaliascnt{proposition}{theorem}
\newtheorem{proposition}[proposition]{Proposition}
\aliascntresetthe{proposition}

\newaliascnt{corollary}{theorem}
\newtheorem{corollary}[corollary]{Corollary}
\aliascntresetthe{corollary}

\newaliascnt{conjecture}{theorem}

\aliascntresetthe{conjecture}

\newaliascnt{problem}{theorem}
\newtheorem{problem}[problem]{Problem}
\aliascntresetthe{problem}

\theoremstyle{definition}
\newaliascnt{definition}{theorem}

\aliascntresetthe{definition}

\newaliascnt{example}{theorem}

\aliascntresetthe{example}

\newaliascnt{remark}{theorem}
\newtheorem{remark}[remark]{Remark}
\aliascntresetthe{remark}

\crefname{theorem}{Theorem}{Theorems}
\Crefname{theorem}{Theorem}{Theorems}
\crefname{lemma}{Lemma}{Lemmas}
\Crefname{lemma}{Lemma}{Lemmas}
\crefname{proposition}{Proposition}{Propositions}
\Crefname{proposition}{Proposition}{Propositions}
\crefname{corollary}{Corollary}{Corollaries}
\Crefname{corollary}{Corollary}{Corollaries}
\crefname{conjecture}{Conjecture}{Conjectures}
\Crefname{conjecture}{Conjecture}{Conjectures}

\DeclareMathOperator{\add}{add}
\DeclareMathOperator{\thick}{thick}
\DeclareMathOperator{\proj}{proj}
\DeclareMathOperator{\pd}{pd}
\DeclareMathOperator{\rad}{rad}
\DeclareMathOperator{\Hom}{Hom}
\DeclareMathOperator{\Ext}{Ext}
\newcommand{\Db}{\mathrm{D}^{\mathrm b}}
\newcommand{\Kb}{\mathrm{K}^{\mathrm b}}
\newcommand{\Cb}{\mathrm{C}^{\mathrm b}}
\newcommand{\C}{\mathbb C}

\begin{document}

\title[Tilting Completion and Self-Orthogonality Modules]
{Tilting Completion and Full-Rank Self-Orthogonal Modules}

\author[W.~Chang]{Wen Chang}
\address{School of Mathematics and Statistics, Shaanxi Normal University,
Xi'an 710062, P.~R.~China}
\email{changwen161@163.com}

\author[Q.~Tang]{Quanyu Tang}
\address{School of Mathematical Sciences, University of Science and Technology
of China, Hefei 230026, P.~R.~China}
\email{tangquanyu827@gmail.com}

\subjclass[2020]{Primary 16D90; Secondary 16E35, 16G10}
\keywords{tilting module, pretilting module, tilting completion, presilting
complex, quasi-hereditary algebra, one-point extension}
\date{}
\begin{abstract}
We give negative answers to two tilting-completion questions for
finite-dimensional algebras.  We construct two finite-dimensional basic
connected quasi-hereditary $\mathbb C$-algebras.  The first admits a faithful
basic full-rank pretilting module with no tilting completion; the second admits
an almost-tilting module with no tilting completion.
The full-rank example also yields counterexamples to two conjectures:
Enomoto's Self-orthogonal Wakamatsu-tilting Conjecture and the
Self-orthogonal Faithful Conjecture of Chen, Li, Zhang, and Zhao.
We further show that the Self-orthogonal Wakamatsu-tilting Conjecture holds
for all finite-dimensional algebras if and only if the Self-orthogonal
Faithful Conjecture holds for all finite-dimensional algebras.

The construction ultimately stems from Krah's non-full exceptional
collection of maximal length on a rational surface and Kalck's associated
full-rank presilting example.
\end{abstract}

\maketitle
\setcounter{tocdepth}{1} 

\tableofcontents
\section{Introduction}

Tilting theory was introduced and developed in the representation theory of
finite-dimensional algebras around the beginning of the 1980s, and has since
become one of the standard tools connecting module categories, homological
algebra, and derived categories.  Brenner and Butler
\cite{BrennerButler1980} initiated the theory by extending the reflection
functors of Bernstein--Gelfand--Ponomarev \cite{BernsteinGelfandPonomarev1973}
and Auslander--Platzeck--Reiten \cite{AuslanderPlatzeckReiten1979} to tilting
functors.  Happel and Ringel \cite{HappelRingel1982} gave the modern
module-theoretic form of classical tilting theory.  Miyashita
\cite{Miyashita1986} and Happel \cite{Happel1987} subsequently extended the
notion to tilting modules of arbitrary finite projective dimension, and
Happel showed that tilting modules induce derived equivalences.  Rickard
\cite{Rickard1989} then generalized the picture further by introducing
tilting complexes and characterizing derived equivalences of rings in these
terms.

In this paper, we consider tilting modules in the generalized sense of
Miyashita and Happel.  Throughout, all algebras are finite-dimensional over
the stated field, and all modules are finite-dimensional left modules.  For an algebra $A$, let $n=n(A)$ denote the number of isomorphism classes of simple
$A$-modules. For an $A$-module $X$, let $|X|$ denote the number of isomorphism
classes of indecomposable direct summands of $X$; in particular, $|A|=n(A)$. An $A$-module $T$ is called a
\emph{tilting module} if the following conditions hold:
\begin{enumerate}[label=\textnormal{(T\arabic*)},leftmargin=2.5em]
  \item $T$ has finite projective dimension, i.e., $\pd_A T<\infty$;
  \item $T$ is self-orthogonal, i.e., $\Ext_A^i(T,T)=0$ for every $i>0$;
  \item there is an exact sequence
  \[
    0\longrightarrow A\longrightarrow T_0\longrightarrow T_1
    \longrightarrow\cdots\longrightarrow T_s\longrightarrow0
  \]
  with $T_i\in\add(T)$ for every $i$.
\end{enumerate}
If only \textnormal{(T1)} and \textnormal{(T2)} are satisfied, we call $T$ a
\emph{pretilting module}.  A pretilting module is called
\emph{partial-tilting} if it is a direct summand of a tilting module.  Thus,
in our terminology, ``pretilting'' does not include the existence of a
complement.  We will always pass to basic representatives when counting
indecomposable summands.  A basic pretilting module with $n-1$
indecomposable direct summands will also be called \emph{almost-tilting}.
The terminology in the literature is not completely uniform: Rickard and
Schofield \cite{RickardSchofield1989}, for example, use ``partial generalized
tilting'' for what we call pretilting, while Coelho--Happel--Unger
\cite{CoelhoHappelUnger1994} use ``partial tilting'' in a different sense. Further details can be found in \cite{Chang2025}, whose conventions we adopt in this paper.

Every basic tilting $A$-module has exactly $n$ indecomposable direct
summands \cite[Theorem~1.19]{Miyashita1986}.  For $1\leq m\leq n$, one may therefore ask the following
completion question.

\begin{problem}[The tilting-completion question $\mathrm{(C_{m})}$]
Let $A$ be a finite-dimensional algebra and let $M$ be a basic pretilting
$A$-module with $|M|=m$.  Must $M$ be partial-tilting?
\end{problem}

The history of this problem already shows a sharp difference between
classical and generalized tilting theory.  In the classical case, where the
projective dimension is at most one, Bongartz's completion theorem
\cite{Bongartz1981} shows that every pretilting module can be completed to a
tilting module; hence $\mathrm{(C_{m})}$ has an affirmative answer for every
$m$.  For generalized tilting modules, Rickard and Schofield
\cite{RickardSchofield1989} formulated the completion problem and proved that
it has an affirmative answer for representation-finite algebras.  They also
constructed a representation-infinite algebra of rank three with a
one-summand pretilting module which has no tilting completion, showing that
the unrestricted problem is genuinely different.  Subsequent work of
Coelho, Happel and Unger \cite{CoelhoHappelUnger1994} related the existence
of complements to contravariant finiteness of suitable Ext-orthogonal
subcategories.  It is also worth noting that the situation changes if one
allows infinitely generated complements: finite-dimensional pretilting
modules admit completions to possibly infinitely generated tilting modules
under substantially more general hypotheses; see
\cite{ColpiTrlifaj1995,AngeleriHugelCoelho2002}.

More recently, the first author \cite{Chang2025} studied the questions
$\mathrm{(C_{m})}$ for gentle algebras.  By using the surface model of the module category over a gentle algebra \cite{BaurCoelho2021,Chang2026}, it is proved that for a gentle algebra of rank $n$, both
$\mathrm{(C_n)}$ and $\mathrm{(C_{n-1})}$ have affirmative answers.  On the
other hand, for every $n\geq3$ and every $1\leq m\leq n-2$, there exists a
connected gentle algebra of rank $n$ carrying a rank-$m$ pretilting module
which is not partial-tilting.

To the best of our knowledge, before the present work the two extremal questions
$\mathrm{(C_n)}$ and $\mathrm{(C_{n-1})}$ were still open in the general
finite-dimensional setting.  The first asks whether the coresolution axiom
\textnormal{(T3)} in the definition of a tilting module can be replaced by the
numerical condition $|T|=n$.  The second asks whether every almost-tilting
module admits a tilting complement.  Our first main result gives a negative
answer to the full-rank question, even inside the class of connected
quasi-hereditary algebras.  In particular, after passing to the opposite algebra to reconcile
module conventions, it provides, to the best of our knowledge, the first
counterexample to the rank-tilting-completion property.

\begin{theorem}
\label{thm:Cn}
There exist a finite-dimensional basic connected quasi-hereditary $\C$-algebra $B$
and a faithful basic pretilting $B$-module $M$ such that $|M|=n(B)$, but $M$ is not a
direct summand of any tilting $B$-module.  Consequently,
$\mathrm{(C_n)}$ has a negative answer.
\end{theorem}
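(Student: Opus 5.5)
The plan is to start from Kalck's full-rank presilting example, which comes from Krah's non-full exceptional collection of maximal length on a rational surface $X$. Concretely, there is a finite-dimensional algebra $\Lambda$ (the endomorphism algebra of a full strong exceptional collection on $X$, hence derived equivalent to $X$) and a presilting complex $P\in\Kb(\proj\Lambda)$ with $|P|=n(\Lambda)$. Because Krah's collection is not full, $\thick(P)\neq\Kb(\proj\Lambda)$, so $P$ is not a direct summand of any silting complex. Indeed, any silting completion of $P$ would have exactly $n(\Lambda)$ summands, so it would equal $P$ up to additive closure, forcing $P$ to be silting and hence $\thick(P)=\Kb(\proj\Lambda)$, a contradiction.

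The task is to convert this complex-level counterexample into a module-level one over a quasi-hereditary algebra. First, shift $P$ so that it is concentrated in nonpositive degrees with $\Hom(\Lambda,P[i])=0$ for $i\neq0$ wherever possible. Then realize it as a module by a standard "module-ification". Pass to an algebra $B$ obtained from $\Lambda$ by iterated one-point extensions, or a triangular matrix construction of the form $\begin{pmatrix}\Lambda & N\\ 0 & \C\end{pmatrix}$. The extension should be chosen so that the terms of $P$ become resolutions of actual $B$-modules $M_i$ of finite projective dimension, while the vanishing $\Ext^{>0}_B(M,M)=0$ is inherited from $\Hom(P,P[>0])=0$. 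Extra summands (for instance the new projective or injective vertices) are added so that $|M|=n(B)$, and so that $M$ contains a projective-injective or otherwise faithful summand, giving faithfulness. Quasi-heredity will be checked directly: $\Lambda$ is directed (exceptional collection, triangular quiver), hence quasi-hereditary, and one-point extensions by suitable standardly filtered modules preserve quasi-heredity with the new vertex added at the end of the order.

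Next, to show that $M$ is not partial-tilting, suppose $T=M\oplus T'$ is tilting. Then $|T|=n(B)=|M|$ forces $\add T=\add M$, so $M$ itself would be tilting. By (T3) and Happel/Rickard, $M$ would then generate $\Kb(\proj B)$ as a thick subcategory. We would then use the recollement or semiorthogonal decomposition $\Kb(\proj B)=\langle \Kb(\proj\Lambda)\text{-part},\ \text{new vertices}\rangle$ and project along it. The images of the summands of $M$ recover $P$ together with the new objects, so generation of $\Kb(\proj B)$ by $M$ would imply $\thick(P)=\Kb(\proj\Lambda)$. This contradicts non-fullness of Krah's collection, which we import from Krah/Kalck via the derived equivalence $\Db(X)\simeq\Db(\Lambda)$. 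Finally, replace $B$ by $B^{\mathrm{op}}$ if needed to match left-module conventions.

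The main obstacle is the module-ification step. We need an explicit extension $B$ where the presilting complex becomes a genuine self-orthogonal module of finite projective dimension. This requires checking all higher Ext vanishings, including those between the added summands and the images of $P$, and also ensuring that the rank count and faithfulness come out exactly. I would first attempt this with the smallest explicit quiver presentation of $\Lambda$ from Kalck's example and compute the relevant Ext groups directly via projective resolutions.
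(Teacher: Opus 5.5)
\textbf{There is a genuine gap: you never construct $B$ and $M$.} Your outer logic matches the paper's. That includes the geometric input: Krah's non-full collection, transported to a basic presilting $P$ with $|P|=n$ and $\thick(P)\neq\Kb(\proj\Lambda)$. It includes the count showing that a tilting completion of a basic full-rank pretilting module would have the same additive closure, forcing $M$ itself to be tilting. It also includes the plan to push a coresolution of $B$ down to contradict $\thick(P)\neq\Kb(\proj\Lambda)$.

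But the one step that carries the content, turning $P$ into a module, is not carried out. The sentence ``the extension should be chosen so that the terms of $P$ become resolutions of actual $B$-modules'' is a requirement, not a construction, and the mechanism you point to does not obviously supply it. For $\begin{pmatrix}\Lambda&N\\0&\C\end{pmatrix}$, the canonical functor $X\mapsto(X,0,0)$ is exact, preserves projectives and is fully faithful on derived categories. So the summands of $P$ stay complexes with cohomology in several degrees. To get modules you would have to name different objects, and then nothing identifies $\Ext^{>0}_B$ between them, or with the added summands, with $\Hom_{\Kb(\proj\Lambda)}(P,P[>0])$.

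Your preliminary normalization ``$\Hom(\Lambda,P[i])=0$ for $i\neq0$'' is not a routine step. If it held for all summands at once, $P$ would already be a full-rank pretilting $\Lambda$-module and no extension would be needed. As it stands, the self-orthogonality of $M$, the equality $|M|=n(B)$, faithfulness, and the semiorthogonal projection in your non-generation step all rest on an unspecified construction. Deferring this to hand computation over Kalck's $13$-vertex algebra leaves the theorem unproved.

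\textbf{The missing idea is to enlarge the algebra so that complexes literally are modules.} The paper shifts $P$ into $[0,N]$ and takes $B=A\otimes_k R_N$, where $R_N$ is the radical-square-zero algebra of the linear quiver $0\to1\to\cdots\to N$. Then $B$-modules are exactly complexes of $A$-modules supported in $[0,N]$.

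The key lemma: for $X$ with $A$-projective terms, $\Ext_B^r(X,Y)\cong\Hom_{\Kb(A\text{-}\mathrm{mod})}(X,Y[r])$ for all $r\geq1$.
\begin{itemize}
\item In degree one, every extension of such an $X$ is degreewise split, so it is classified by a chain map $X\to Y[1]$ up to homotopy.
\item In higher degrees, dimension shifting along $0\to\rho X\to\mathcal Q(X)\to X\to0$ reduces to degree one, since $\mathcal Q(X)$ is a sum of disk and stalk projectives.
\end{itemize}

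One then sets $M=P\oplus Q$ with $Q=\bigoplus_{0\le i<N}\bigoplus_j D_i(Ae_j)$. These contractible projective disk complexes do three things.
\begin{itemize}
\item They bring the count to $n+Nn=(N+1)n=n(B)$.
\item They have no Ext with $P$ in either direction: $Q$ is contractible and projective.
\item They make $M$ faithful, since $Q\cong B(1_A\otimes(\varepsilon_0+\cdots+\varepsilon_{N-1}))$ is faithful.
\end{itemize}

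Non-generation uses the exact forgetful functor to $\Db(A\text{-}\mathrm{mod})$. It kills $Q$, sends $M$ to $P$ and $B$ to $A[-N]$, so a tilting coresolution of $B$ would put $A$ in $\thick(P)$. Quasi-heredity follows because tensor products of quasi-hereditary algebras are quasi-hereditary.
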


The full-rank example also settles a conjecture of Enomoto on self-orthogonal
modules.  Enomoto \cite{Enomoto2023} introduced projectively Wakamatsu tilting
modules and conjectured that, for an Artin algebra $\Lambda$, every
self-orthogonal $\Lambda$-module $T$ satisfying $|T|=|\Lambda|$ is Wakamatsu
tilting \cite[Conjecture~5.9]{Enomoto2023}.  Since Enomoto works with finitely
generated right modules, we apply his statements below to opposite algebras.
Corollary~\ref{cor:enomoto-consequences} shows that the module $M$ in
Theorem~\ref{thm:Cn} is not Wakamatsu tilting.  Consequently, after viewing
$M$ as a right $B^{\mathrm{op}}$-module, Enomoto's conjecture has a negative
answer even for faithful basic modules over basic connected quasi-hereditary
algebras.  The same corollary shows that $M$ has no Bongartz completion in
Enomoto's sense and that
\[
  M^{\perp}
  =
  \{X\in B\text{-}\mathrm{mod}\mid
    \Ext_B^i(M,X)=0\text{ for every }i>0\}
\]
has no finite cover and is not covariantly finite.  

Chen, Li, Zhang, and Zhao \cite[Section~5]{ChenLiZhangZhao2025} formulated the
Self-orthogonal Faithful Conjecture, which asserts that every self-orthogonal
$A$-module $T$ satisfying $|T|=n(A)$ is faithful.  We prove a general reduction
from the failure of Wakamatsu tilting to the failure of faithfulness and apply
it to Theorem~\ref{thm:Cn}.

\begin{theorem}
\label{thm:SFC-counterexample}
There exist a finite-dimensional basic connected quasi-hereditary
$\C$-algebra $\Gamma$ and a basic $\Gamma$-module $T$ such that
\[
  |T|=n(\Gamma),
  \qquad
  \pd_\Gamma T<\infty,
  \qquad
  \Ext_\Gamma^i(T,T)=0\quad(i>0),
  \qquad
  \operatorname{Ann}_\Gamma(T)\neq0.
\]
Consequently, the Self-orthogonal Faithful Conjecture has a negative answer.  Moreover, $T$ is not a
direct summand of any tilting $\Gamma$-module.
\end{theorem}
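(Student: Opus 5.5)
We plan to deduce Theorem~\ref{thm:SFC-counterexample} from Theorem~\ref{thm:Cn} by a one-point extension (or a triangular-matrix construction) that converts the failure of the Wakamatsu tilting property into a nonzero annihilator. The starting point is the pair $(B,M)$ of Theorem~\ref{thm:Cn}. By Corollary~\ref{cor:enomoto-consequences}, $M$ is not Wakamatsu tilting, even though it is faithful, self-orthogonal, of finite projective dimension, and satisfies $|M|=n(B)$. Concretely, the coresolution of $B$ by $\add M$ cannot be continued indefinitely with all cokernels in ${}^{\perp}M$. For a full-rank pretilting module this means that a minimal left $\add M$-approximation $B\to M^0$ exists (since $M$ is faithful it is injective), but its cokernel fails to be orthogonal, or some later step fails.

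First, we prove a general reduction lemma. Suppose $A$ has a self-orthogonal module $T$ with $\pd T<\infty$ and $|T|=n(A)$ that is not Wakamatsu tilting. We then form $\Gamma=A[X]$, a one-point extension by a suitable module $X$ (we expect $X$ to be the cokernel of the left approximation, or dually a coextension), and set $T'=T\oplus P$. Here $P$ is the new indecomposable projective, or a new summand chosen so that $|T'|=n(\Gamma)=n(A)+1$. The extension is designed so that $\Ext^i_\Gamma(T',T')=0$ reduces to self-orthogonality of $T$ together with vanishing of $\Ext^i_A(X,T)$ and $\Hom$ terms that we control. We also need finite projective dimension, which is inherited because $\pd_A X<\infty$ holds for the cokernel of an $\add T$-approximation of $A$. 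At the same time, the idempotent $e$ of the new vertex (or an element of the bimodule $X$) should annihilate $T'$ precisely because the Wakamatsu coresolution breaks.

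Second, we check that quasi-hereditarity and connectedness survive. One-point extensions of quasi-hereditary algebras are quasi-hereditary when the new vertex is added at the top or bottom of the order, since the new simple becomes a standard module of the appropriate kind. Connectedness holds as soon as $X\neq0$. Finally, to show that $T'$ is not a summand of any tilting $\Gamma$-module, we note that tilting modules are faithful. Indeed, (T3) embeds $\Gamma$ in $\add$ of the tilting module, and $\Ann(T')\neq0$ persists in any module containing $T'$ as a summand together with any extra summands, since $|T'|=n(\Gamma)$ forces a basic tilting completion to equal $T'$ itself. This gives the last claim immediately.

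We expect the main obstacle to be the design of the extension so that self-orthogonality is preserved exactly while faithfulness fails. Here the precise choice of $X$ is delicate, and the Ext computation over $\Gamma$ requires the long exact sequences for one-point extensions. Our first attempt would be the triangular matrix algebra $\begin{pmatrix} A & X\\ 0 & \C\end{pmatrix}$ with $X$ the first cosyzygy-type cokernel witnessing the failure, together with an explicit check on the example $B$ of Theorem~\ref{thm:Cn}.
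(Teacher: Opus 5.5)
\paragraph{Verdict.} Your outer frame matches the paper's, but there is a genuine gap at the core of the statement, and the one concrete candidate you name fails.

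\paragraph{What agrees with the paper.} You start from $(B,M)$ of Theorem~\ref{thm:Cn}, use that $M$ is not Wakamatsu tilting, and pass to a one-point extension $\Gamma=\left(\begin{smallmatrix}B&U\\0&\C\end{smallmatrix}\right)$. This $\Gamma$ is connected since $U\neq0$, and quasi-hereditary via the projective heredity ideal $\Gamma f$. It therefore has finite global dimension, so $\pd_\Gamma T<\infty$ is automatic and your $\pd_A X$ argument is not needed. You then get non-completability from faithfulness of tilting modules plus the rank count. All of this is as in the paper.

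\paragraph{The gap: which summand to add.} The actual content of the theorem is the choice of new summand that makes the module simultaneously self-orthogonal and nonfaithful, and you leave this open.
\begin{itemize}
\item If the new summand is the new indecomposable projective $\Gamma f=(U,\C,1_U)$, then $T'=\iota(M)\oplus\Gamma f$ is faithful for every choice of $U$. Since $M$ is faithful, $B\hookrightarrow M^m$ for some $m$, hence $\Gamma=\iota(B)\oplus\Gamma f\hookrightarrow\iota(M)^m\oplus\Gamma f$.
\item That candidate need not even be self-orthogonal, because $\Ext^r_\Gamma(\iota M,\Gamma f)\cong\Ext^r_B(M,U)$, which you have no reason to expect to vanish.
\item The new idempotent $f$ cannot annihilate $T'$ once the added summand has a nonzero component at the new vertex.
\item Your parenthetical alternative, an element $z_u=\left(\begin{smallmatrix}0&u\\0&0\end{smallmatrix}\right)$ of the bimodule, is the right mechanism. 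But $z_u$ kills a module $(Y,\C,\varphi)$ only if $\varphi(u)=0$. So the added summand must have a \emph{non-injective} structure map, and you neither produce such a summand nor show it can be made self-orthogonal.
\end{itemize}

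\paragraph{Where the coresolution actually breaks.} Your diagnosis points the wrong way. The cokernel of an \emph{injective} left $\add(M)$-approximation of a module in ${}^{\perp}M$ always lies in ${}^{\perp}M$: surjectivity of $\Hom_B(M^i,M)\to\Hom_B(X_i,M)$ plus self-orthogonality gives this. So ``its cokernel fails to be orthogonal'' cannot happen. The only possible failure is that some minimal left approximation $g\colon U\to M^0$ with $U\in{}^{\perp}M$ is not a monomorphism (Lemma~\ref{lem:nonmonic-left-approximation}). Since $M$ is faithful, this occurs only at a later step, where $U$ is the cokernel of the previous, injective, approximation. So your guess for $X$ is close, but only at the first non-injective step.

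\paragraph{The missing construction.} Extend by this $U$ and add $C_g=(M^0,\C,g)$, encoding the approximation $g$ itself as the structure map. Then:
\begin{itemize}
\item $\Ext^1_\Gamma(C_g,\iota Y)$ is the cokernel of $g^*\colon\Hom_B(M^0,Y)\to\Hom_B(U,Y)$. This vanishes for $Y\in\add(M)$ precisely because $g$ is an approximation.
\item For $r\geq2$, $\Ext^r_\Gamma(C_g,\iota Y)\cong\Ext^{r-1}_B(U,Y)=0$, because $U\in{}^{\perp}M$.
\item Left minimality of $g$ makes $C_g$ indecomposable, which gives the count $|T|=n(B)+1=n(\Gamma)$.
\item For $0\neq u\in\ker g$, the element $z_u$ kills $\iota(M)$, and it kills $C_g$ since $z_u(x,\lambda)=(\lambda g(u),0)=0$. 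This is the nonzero annihilator.
\end{itemize}
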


The reduction proving Theorem~\ref{thm:SFC-counterexample} also shows that,
as universal assertions over all finite-dimensional algebras, Enomoto's
Self-orthogonal Wakamatsu-tilting Conjecture and the Self-orthogonal Faithful
Conjecture are equivalent; see
Corollary~\ref{cor:SWC-SFC-equivalent-universally}.

The adjacent almost-full-rank case also fails.

\begin{theorem}
\label{thm:Cnminusone}
There exist a finite-dimensional basic connected quasi-hereditary
$\C$-algebra $\Lambda$ and a basic pretilting $\Lambda$-module $L$ such that
$|L|=n(\Lambda)-1$, but $L$ is not a direct summand of any tilting
$\Lambda$-module. Consequently, $\mathrm{(C_{n-1})}$ has a negative answer
even for connected quasi-hereditary algebras.
\end{theorem}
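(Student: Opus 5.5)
Starting from the full-rank example of Theorem~\ref{thm:Cn}, we build $\Lambda$ as a one-point extension (or coextension) of $B$. Following the keywords, set $\Lambda=B[X]$ for a suitable $B$-module $X$, chosen so that $n(\Lambda)=n(B)+1$. Let $L$ be the module $M$ regarded as a $\Lambda$-module via the recollement/restriction functor, with the new vertex acting by zero. Then $|L|=|M|=n(B)=n(\Lambda)-1$.

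The steps run as follows.

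\textbf{Step 1: choose $X$.} We choose $X$ so that $\Lambda$ remains basic, connected and quasi-hereditary. A natural choice is a projective $B$-module, or a standard module compatible with the heredity order. Adjoining a new vertex at the top (or bottom) of the order then extends the heredity chain, since the new idempotent $e$ gives a heredity ideal $\Lambda e\Lambda$ that is projective.

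\textbf{Step 2: $L$ is pretilting.} For a one-point extension, $B$ is a quotient $\Lambda/\Lambda e\Lambda$ with $\Lambda e\Lambda$ a heredity ideal, so $\Ext^i_B(U,V)\cong\Ext^i_\Lambda(U,V)$ for $B$-modules $U,V$. Projective dimensions stay finite because $\pd_\Lambda B<\infty$. Hence $\pd_\Lambda L<\infty$ and $\Ext^i_\Lambda(L,L)=0$ for all $i>0$.

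\textbf{Step 3: $L$ has no tilting complement.} Suppose $T=L\oplus Y$ is a basic tilting $\Lambda$-module. Since $|T|=n(\Lambda)$ and $|L|=n(\Lambda)-1$, $Y$ is indecomposable. We must derive a contradiction from the fact that $M$ is not a summand of any tilting $B$-module and that $M^{\perp}$ fails to be covariantly finite (Corollary~\ref{cor:enomoto-consequences}). The cleanest route is the recollement on derived categories induced by $e$. The tilting complex $T$ gives a silting object of $\Kb(\proj\Lambda)$, and applying the left adjoint $\Lambda/\Lambda e\Lambda\otimes^{\mathbf L}_\Lambda-$ (silting reduction, à la Iyama--Yang) should produce a silting object of $\Kb(\proj B)$ containing $M$. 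Because $|M|=n(B)$ and silting objects have exactly $n(B)$ summands, that object would be $M$ itself (up to add), so $M$ would be tilting over $B$, contradicting Theorem~\ref{thm:Cn}. Alternatively, one can apply $\Hom$ into the new simple and use the coresolution (T3) of $\Lambda$ by $\add T$, restricted to $B$, to produce a finite $\add M$-coresolution of $B$. Either way this makes $M$ tilting, again contradicting Theorem~\ref{thm:Cn}.

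\textbf{Main obstacle.} Step 3 is the hard part: ensuring that the summand $Y$ contributes nothing after reduction, that is, that $Y$ is killed by, or lies in the thick subcategory generated by, the new vertex. This is what forces the choice of $X$. If silting reduction does not directly work, I would instead pick $X$ so that any complement $Y$ must have a specific dimension vector, computed through the Euler form (full rank of $M$ determines the Grothendieck class of $Y$ up to the new coordinate). I would then show directly that no module with that class is Ext-orthogonal to $L$ with finite projective dimension, or that its existence would make $M^{\perp}$ covariantly finite over $B$, contradicting Corollary~\ref{cor:enomoto-consequences}.
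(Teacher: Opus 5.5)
\textbf{Assessment.} Your setup agrees with the paper's: $\Lambda$ is a one-point extension of $B$ by a projective module (the paper takes $\Lambda=\left(\begin{smallmatrix}B&B\\0&\C\end{smallmatrix}\right)$), and $L$ is $M$ with the new vertex acting by zero. Steps~1--2 are fine. The gap is Step~3, which carries the whole content of the theorem, and the route you call cleanest does not work.

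Write $f$ for the new idempotent, $e=1-f$, and $F=B\otimes^{\mathbf L}_\Lambda-$. Iyama--Yang reduction only treats silting objects that have $\Lambda f$ as a summand, and a completion $T=L\oplus Y$ need not have one. For general $T$, adjunction and the recollement triangle $\Lambda f\otimes_{\C}fT\to T\to FT\to$ give, for $n\geq1$,
\[
  \Hom_{\Kb(\proj B)}(FT,FT[n])\cong\Ext^{n+1}_\Lambda(T,\Lambda f)\otimes_{\C}fT ,
\]
and $fT\neq0$ because a tilting module is faithful. The new simple $S$ is injective and $0\to\Lambda e\to\Lambda f\to S\to0$ is exact, so $\Ext^{r}_\Lambda(T,\Lambda f)\cong\Ext^{r}_\Lambda(T,\Lambda e)$ for $r\geq2$. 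Since $\Ext^d_\Lambda(T,\Lambda)\neq0$ for $d=\pd_\Lambda T$, this forces $\Ext^d_\Lambda(T,\Lambda f)\neq0$ once $d\geq2$. Here $d\geq\pd_BM\geq2$, because if $\pd_BM\leq1$ then Bongartz would complete $M$, contradicting Theorem~\ref{thm:Cn}. So $FT$ is never presilting in your situation, and no contradiction can come from it.

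Your other suggestions do not close the gap either. The Euler-form and covariant-finiteness ideas are not arguments. Restricting (T3) to $B$ only gives a coresolution of $B\oplus B$ by $\add(M\oplus eY)$, not by $\add M$, and you give no reason why $eT$ should be self-orthogonal.

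\textbf{The missing idea.} The paper never needs to control $Y$. It uses the corner functor $e(-)=\Hom_\Lambda(\Lambda e,-)$ instead of the quotient functor.
\begin{itemize}
  \item Because $e\Lambda f\cong B$ is projective, $e(-)$ is exact, preserves projectives, and $\Hom_B(e\Lambda,-)$ is exact.
  \item The unit $Y\to\Hom_B(e\Lambda,eY)$ has kernel and cokernel that are sums of the injective simple $S$. Hence $\Ext^1_\Lambda(X,Y)\to\Ext^1_B(eX,eY)$ is onto, and $\Ext^r$ is bijective for $r\geq2$. So $eT$ is pretilting.
  \item Restricting the coresolution of $\Lambda$ gives $B\oplus B\cong e\Lambda\in\thick(eT)$. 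By the generation criterion, $eT$ is tilting.
  \item $eT$ contains $M=eL$ as a summand, which contradicts Theorem~\ref{thm:Cn} directly.
\end{itemize}
Nothing about $eY$ is needed, because Theorem~\ref{thm:Cn} already rules out every tilting $B$-module having $M$ as a summand. Your ``main obstacle'' disappears once you restrict along $e$ rather than reduce along $f$.
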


Our construction starts from the derived side. A complex $P^{\bullet}\in\Kb(\proj A)$ is \emph{presilting} if
$\Hom_{\Kb(\proj A)}(P^{\bullet},P^{\bullet}[i])=0$ for every $i>0$,
and it is \emph{silting} if, in addition,
$\thick(P^{\bullet})=\Kb(\proj A)$, see \cite[Definition 2.1]{AiharaIyama2012}. Krah \cite{Krah2024} constructed a non-full
exceptional collection of maximal length on a rational surface.  Kalck
\cite{Kalck2023} observed that, after passing through a tilting equivalence,
this yields a full-rank presilting complex over a finite-dimensional algebra
which cannot be completed to a silting object.  Our main technical step is
to transfer such a complex back to the module category. We regard a bounded complex of $A$-modules as a module over
$B=A\otimes_k R$, where $R$ is the radical-square-zero path algebra of a
linearly oriented quiver, and adjoin the appropriate projective disk
complexes. In this way we obtain
\[
  \begin{array}{c}
    \text{full-rank presilting complex }P^{\bullet},\\[-1pt]
    \thick(P^\bullet)\neq\Kb(\proj A)
  \end{array}
  \quad\longmapsto\quad
  \begin{array}{c}
    \text{faithful full-rank pretilting $B$-module }M,\\[-1pt]
    M\text{ has no tilting completion}.
  \end{array}
\]
The homological input is a natural comparison between ordinary extension
groups over the tensor-product algebra and morphisms in the homotopy
category.  This proves Theorem~\ref{thm:Cn}.  

Then a one-point extension construction produces the nonfaithful
self-orthogonal example, which proves
Theorem~\ref{thm:SFC-counterexample} and yields the universal equivalence
between the two self-orthogonality conjectures.
Another regular one-point extension
then converts the full-rank example into an almost-full-rank example.  A restriction argument shows that any hypothetical tilting completion over the
one-point extension would restrict to a tilting completion of $M$, which
gives Theorem~\ref{thm:Cnminusone}.

The paper is organized as follows. Section~2 realizes bounded complexes as
modules and establishes the Ext--homotopy comparison. Section~3 proves the
transfer theorem, including the rank count, faithfulness, and the
non-generation argument. Section~4 combines this construction with the work
of Krah, Hille--Perling, and Kalck to prove Theorem~\ref{thm:Cn}.
Section~5 derives consequences for Wakamatsu tilting modules,
Ext-perpendicular categories, and $\tau$-rigidity. Section~6 establishes the
one-point extension reduction, proves
Theorem~\ref{thm:SFC-counterexample}, and compares the two self-orthogonality
conjectures. Finally, Section~7 studies a regular one-point extension and
proves Theorem~\ref{thm:Cnminusone}.

\section{Complexes as modules and the Ext--homotopy comparison}

Let $k$ be a field, let $A$ be a finite-dimensional $k$-algebra, and fix an
integer $N\geq1$.  Consider the quiver
\[
  \Gamma=\Gamma_N:\quad
  0\xrightarrow{\alpha_0}1\xrightarrow{\alpha_1}\cdots
  \xrightarrow{\alpha_{N-1}}N
\]
and the radical-square-zero algebra
\[
  R=R_N
  =
  k\Gamma\big/
  \bigl\langle \alpha_{i+1}\alpha_i\mid 0\leq i\leq N-2\bigr\rangle.
\]
We compose paths from right to left.  Let $B=A\otimes_k R$.

Denote by $C^{[0,N]}(A\text{-}\mathrm{mod})$ the category of complexes
of finite-dimensional left $A$-modules supported in $[0,N]$, equipped with
its natural abelian structure.
Although the following identification is standard, we record it in our
conventions for later reference.

\begin{lemma}\label{lem:complex-module-equivalence}
There is a natural equivalence of abelian categories
\[
  B\text{-}\mathrm{mod}
  \simeq
  C^{[0,N]}(A\text{-}\mathrm{mod}).
\]
Under this equivalence, a left $B$-module $X$ corresponds to the complex
\[
X^\bullet=X^0\xrightarrow{d_X^0}X^1\longrightarrow\cdots
\xrightarrow{d_X^{N-1}}X^N,
\]
where $X^i=(1_A\otimes\varepsilon_i)X$ and
$d_X^i=(1_A\otimes\alpha_i)|_{X^i}$.
Moreover, $B$-module homomorphisms correspond to chain maps, and exact
sequences correspond to degreewise exact sequences.
\end{lemma}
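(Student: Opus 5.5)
The plan is to identify $B$-modules with modules over $R$ carrying a commuting $A$-action, and then use the standard description of representations of a bound quiver. First, $R$ is a bound path algebra. Its idempotents are the trivial paths $\varepsilon_0,\dots,\varepsilon_N$, and it has basis $\{\varepsilon_i\}\cup\{\alpha_i\}$. The relations are $\alpha_{i+1}\alpha_i=0$, together with the path-algebra identities $\alpha_i=\varepsilon_{i+1}\alpha_i\varepsilon_i$, $\varepsilon_i\varepsilon_j=\delta_{ij}\varepsilon_i$ and $\sum_i\varepsilon_i=1$. Since $B=A\otimes_k R$, a left $B$-module is the same as a $k$-vector space $X$ with commuting left actions of $A$ (via $a\otimes1$) and of $R$ (via $1\otimes r$). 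This holds because $A\otimes1$ and $1\otimes R$ are commuting subalgebras generating $B$, and $B$ is their tensor product over $k$.

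Given $X$, put $X^i=(1\otimes\varepsilon_i)X$. Because the $\varepsilon_i$ are orthogonal idempotents summing to $1$ and commute with $A\otimes1$, we get a decomposition $X=\bigoplus_i X^i$ into $A$-submodules. The element $1\otimes\alpha_i=(1\otimes\varepsilon_{i+1})(1\otimes\alpha_i)(1\otimes\varepsilon_i)$ kills $X^j$ for $j\neq i$ and maps $X^i$ into $X^{i+1}$. It is $A$-linear since it commutes with $A\otimes1$, so $d^i_X$ is an $A$-homomorphism. The relation $\alpha_{i+1}\alpha_i=0$ gives $d^{i+1}d^i=0$, so $X^\bullet$ is a complex of finite-dimensional $A$-modules supported in $[0,N]$.

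Conversely, given a complex $(X^i,d^i)$, set $X=\bigoplus_{i=0}^N X^i$. Let $A$ act diagonally, let $\varepsilon_i$ act as the projection onto $X^i$, and let $\alpha_i$ act as $d^i$ on $X^i$ and as zero on the other summands. The defining relations of $R$ hold, so this is an $R$-action. It commutes with the $A$-action because each $d^i$ is $A$-linear, and therefore it defines a $B$-module. The two constructions are mutually inverse, up to the canonical identification $X\cong\bigoplus(1\otimes\varepsilon_i)X$ in one direction and an equality in the other.

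On morphisms, a $B$-linear map $f\colon X\to Y$ commutes with $1\otimes\varepsilon_i$, so it restricts to $A$-linear maps $f^i\colon X^i\to Y^i$. Commuting with $1\otimes\alpha_i$ says exactly $f^{i+1}d_X^i=d_Y^if^i$. Conversely, a chain map $(f^i)$ assembles to $\bigoplus f^i$, which commutes with $A$, with each $\varepsilon_i$ and with each $\alpha_i$, hence with all of $B$. This gives a fully faithful, essentially surjective, $k$-linear functor, i.e.\ an equivalence. Any additive equivalence between abelian categories is exact. Alternatively, directly: a sequence of $B$-modules is exact iff it is exact as vector spaces, iff it is exact after applying each exact projection functor $X\mapsto(1\otimes\varepsilon_i)X$, which is degreewise exactness. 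Naturality holds because all constructions commute with morphisms.

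No step is a genuine obstacle. The only point needing care is the convention check: right-to-left composition must make $\alpha_i$ send the $\varepsilon_i$-part to the $\varepsilon_{i+1}$-part, so that differentials raise degree. Finite-dimensionality also matches on both sides, since $X$ is finite-dimensional iff each $X^i$ is.
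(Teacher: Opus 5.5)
Your proposal is correct and takes the same route as the paper: the standard equivalence between modules over the bound quiver algebra $R$ and representations of its bound quiver, carried out in $A\text{-}\mathrm{mod}$ via the decomposition $X=\bigoplus_i(1_A\otimes\varepsilon_i)X$ and the identities $\varepsilon_{i+1}\alpha_i=\alpha_i=\alpha_i\varepsilon_i$. You spell out details that the paper leaves as ``immediate'': the commuting-actions description of $A\otimes_k R$-modules, the inverse construction, the treatment of morphisms, and exactness. All of these checks are sound.
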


\begin{proof}
This is the usual equivalence between modules over a bound quiver algebra
and representations of its bound quiver, applied in the category
$A\text{-}\mathrm{mod}$. Explicitly, the orthogonal idempotents
$\varepsilon_0,\ldots,\varepsilon_N$ give $X=\bigoplus_{i=0}^N(1_A\otimes\varepsilon_i)X$.
Since
$
\varepsilon_{i+1}\alpha_i=\alpha_i=\alpha_i\varepsilon_i,$
left multiplication by $1_A\otimes\alpha_i$ induces an $A$-linear map
$X^i\to X^{i+1}$, and the relations
$\alpha_{i+1}\alpha_i=0$ are precisely the identities
$d_X^{i+1}d_X^i=0$. The converse construction is immediate, and morphisms,
kernels, and cokernels are computed degreewise.
\end{proof}

By Lemma~\ref{lem:complex-module-equivalence}, we henceforth identify
$B$-modules with complexes of $A$-modules supported in $[0,N]$, and
extend all such complexes by zero outside this interval. We use the cohomological convention
$(X[t])^i=X^{i+t}$ and $d_{X[t]}^i=(-1)^t d_X^{i+t}$.

For a complex $X$ supported in $[0,N]$, define $\rho X$ by
$(\rho X)^0=0$ and $(\rho X)^i=X^{i-1}$ for $1\leq i\leq N$, with
differential $d_{\rho X}^i=-d_X^{i-1}$ for $1\leq i<N$.
Thus $\rho X$ is obtained from $X[-1]$ by deleting the term in degree
$N+1$.

For an $A$-module $U$ and $0\leq i<N$, let $D_i(U)$ be the disk complex
with $U$ in degrees $i$ and $i+1$, identity differential, and zero terms in
all other degrees.  Let $S_N(U)$ be the stalk complex with $U$ in degree
$N$.  If $U$ is projective over $A$, then $D_i(U)$ and $S_N(U)$ are
projective $B$-modules.  Indeed, $D_i(A)\cong B(1_A\otimes\varepsilon_i)$
for $0\leq i<N$ and $S_N(A)\cong B(1_A\otimes\varepsilon_N)$, and the
claim follows by taking finite direct sums and direct summands.

The following canonical presentation is the source of the comparison
result.

\begin{lemma}
\label{lem:canonical-presentation}
Let $X$ be a $B$-module.
Suppose that every term of $X$ is projective as an $A$-module.  Set
\[
  \mathcal Q(X)
  =
  \bigoplus_{i=0}^{N-1}D_i(X^i)\oplus S_N(X^N).
\]
Then $\mathcal Q(X)$ is a projective $B$-module, and there is a natural
short exact sequence
\begin{equation}
\label{eq:canonical-presentation}
  0\longrightarrow\rho X\longrightarrow\mathcal Q(X)
  \xrightarrow{\pi_X}X\longrightarrow0.
\end{equation}
\end{lemma}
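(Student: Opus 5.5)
The plan is to build $\pi_X$ directly from the universal properties of the disk and stalk complexes, and then compute its kernel degree by degree. Projectivity of $\mathcal Q(X)$ is immediate from the remark preceding the lemma: each $X^i$ is a projective $A$-module, so $D_i(X^i)$ and $S_N(X^N)$ are projective $B$-modules, and finite direct sums preserve this.

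\emph{Step 1: the components of $\pi_X$.} For $0\le i<N$ there is a chain map $D_i(X^i)\to X$ given by $\mathrm{id}$ in degree $i$ and $d_X^i$ in degree $i+1$. It commutes with the differentials because $D_i$ has identity differential and $d_X^{i+1}d_X^i=0$. In fact, chain maps $D_i(U)\to X$ correspond bijectively to $A$-maps $U\to X^i$. The inclusion $S_N(X^N)\to X$ in degree $N$ is a chain map because $X^{N+1}=0$. Summing these components gives $\pi_X$, and naturality in $X$ is clear since every component is built from the identity and $d_X$.

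\emph{Step 2: degreewise description.} In degree $j$ the complex $\mathcal Q(X)$ has term $X^{j-1}\oplus X^j$, where the first summand comes from $D_{j-1}$ and the second from $D_j$ (or from $S_N$ when $j=N$). For $j=0$ the term is just $X^0$. The map $\pi_X^j$ is $(x',x)\mapsto d_X^{j-1}x'+x$.
\begin{itemize}
\item Surjectivity: $\pi_X^j$ is surjective via the second summand.
\item Kernel: $\ker\pi_X^j=\{(x',-d^{j-1}_Xx')\}\cong X^{j-1}$, which is exactly $(\rho X)^j$. For $j=0$ the kernel is $0=(\rho X)^0$.
\end{itemize}

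\emph{Step 3: matching the differential.} The differential of $\mathcal Q(X)$ from degree $j$ to $j+1$ sends $(x',x)\mapsto(x,0)$: the $D_{j-1}$-part stops at degree $j$, and the $D_j$-part maps identically into the first summand of degree $j+1$. Hence $(x',-d x')\mapsto(-dx',0)$. Under the identification $x'\mapsto(x',-dx')$, the induced differential on the kernel is therefore $x'\mapsto -d_X^{j-1}x'$, which agrees with $d_{\rho X}^j=-d_X^{j-1}$. In the top degree $N$ the kernel is $X^{N-1}$ and the next term vanishes, which is consistent with the truncation in the definition of $\rho X$.

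By \cref{lem:complex-module-equivalence}, exactness degreewise gives exactness in $B$-mod. The only delicate point is getting the sign and truncation conventions right in Step 3; the rest is routine.
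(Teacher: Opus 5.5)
Your proof is correct and takes essentially the same route as the paper: you write $\mathcal Q(X)$ degreewise as $X^{j-1}\oplus X^j$, set $\pi_X^j(x',x)=d_X^{j-1}x'+x$, get surjectivity from the second summand, and identify the kernel with $\rho X$ via $x'\mapsto(x',-d_X^{j-1}x')$, with induced differential $-d_X^{j-1}$. Building $\pi_X$ from the universal property of the disk and stalk complexes yields the same map, and it makes the chain-map and naturality checks that the paper leaves as a ``direct calculation'' explicit.
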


\begin{proof}
The term of $\mathcal Q(X)$ in degree zero is $X^0$, and its term in degree
$i\geq1$ is $X^{i-1}\oplus X^i$. Define $\pi_X\colon \mathcal Q(X)\longrightarrow X$ by
$\pi_X^0=1_{X^0}$ and
$\pi_X^i(u,v)=d_X^{i-1}(u)+v$ for $1\leq i\leq N$.
A direct calculation shows that $\pi_X$ is a morphism of complexes, and it
is degreewise surjective.  For $1\leq i\leq N$, the map
\[
  X^{i-1}\longrightarrow\ker(\pi_X^i),
  \qquad
  u\longmapsto\bigl(u,-d_X^{i-1}(u)\bigr),
\]
is an isomorphism.  Thus the inclusion $\iota_X\colon\rho X\to\mathcal Q(X)$
is given by $\iota_X^0=0$ and
$\iota_X^i(u)=(u,-d_X^{i-1}(u))$ for $1\leq i\leq N$.  Under these
identifications, the differential induced on the kernel is $-d_X^{i-1}$.
Hence the kernel is naturally identified with $\rho X$, proving
\eqref{eq:canonical-presentation}.
\end{proof}

\begin{corollary}
\label{cor:projective-dimension}
If every term of a $B$-module $X$ is projective over $A$, then $\pd_B X\leq N$.
\end{corollary}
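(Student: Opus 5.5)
The plan is to iterate the canonical presentation of Lemma~\ref{lem:canonical-presentation} and observe that each application of $\rho$ shifts the complex one step to the right inside the window $[0,N]$. After $N+1$ applications the complex vanishes. This produces a projective resolution of length at most $N$.

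First we check that the hypothesis is inherited by $\rho X$. Its terms are $(\rho X)^0=0$ and $(\rho X)^i=X^{i-1}$ for $1\leq i\leq N$, all projective over $A$ whenever the terms of $X$ are. So Lemma~\ref{lem:canonical-presentation} applies again to $\rho X$, then to $\rho^2X$, and so on. For every $j\geq0$ we obtain short exact sequences
\[
  0\longrightarrow\rho^{j+1}X\longrightarrow\mathcal Q(\rho^jX)\longrightarrow\rho^jX\longrightarrow0
\]
with $\mathcal Q(\rho^jX)$ a projective $B$-module.

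Next we track supports. If $X$ is supported in $[0,N]$, then $\rho^jX$ is supported in $[j,N]$, since each application of $\rho$ inserts a zero in degree $0$ and discards the top term $X^N$. Hence $\rho^{N}X$ is the stalk complex $S_N(X^0)$, up to sign of the (zero) differential. This is already projective, being $S_N$ of a projective $A$-module; equivalently $\rho^{N+1}X=0$.

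Splicing the sequences for $j=0,\dots,N-1$ gives an exact sequence
\[
  0\to\rho^NX\to\mathcal Q(\rho^{N-1}X)\to\cdots\to\mathcal Q(X)\to X\to0
\]
whose terms other than $X$ are all projective. This has length $N$, giving $\pd_BX\leq N$. There is no real obstacle. The only point needing care is the bookkeeping that $\rho^NX$ is projective, so the resolution stops at step $N$ rather than $N+1$. Alternatively one can take the resolution to length $N+1$ with last term $\mathcal Q(\rho^NX)=S_N(X^0)$ and note that the final map is then an isomorphism.
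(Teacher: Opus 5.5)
Your proposal is correct and is essentially the paper's proof: apply the canonical presentation successively to $X,\rho X,\ldots,\rho^{N-1}X$, and stop because $\rho^N X=S_N(X^0)$ is already projective over $B$. You also check explicitly that $A$-projectivity of the terms passes to each $\rho^jX$, a detail the paper leaves implicit.
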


\begin{proof}
Apply Lemma~\ref{lem:canonical-presentation} successively to
$X,\rho X,\ldots,\rho^{N-1}X$.  The complex
$\rho^N X=S_N(X^0)$ is projective over $B$, so the resulting projective
resolution has length at most $N$.
\end{proof}

We now identify all positive extension groups.  The degree-one case is the
standard description of extensions in the exact category of complexes with
degreewise split exact sequences; see, for example,
\cite[Chapter~I]{Happel1988}.  We include the argument because the extension
groups below are computed in the ordinary module category of $B$.

\begin{proposition}
\label{prop:ext-comparison}
Let $X$ and $Y$ be $B$-modules, and suppose that every term of $X$ is
projective over $A$.  For every integer $r\geq1$, there is a natural
isomorphism
\begin{equation}
\label{eq:ext-comparison}
  \Ext_B^r(X,Y)
  \cong
  \Hom_{\Kb(A\text{-}\mathrm{mod})}(X,Y[r]).
\end{equation}
\end{proposition}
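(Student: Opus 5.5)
We prove the isomorphism by dimension shifting along the canonical presentation of Lemma~\ref{lem:canonical-presentation}, first settling the case $r=1$ and then inducting on $r$. Throughout we use that if every term of $X$ is projective over $A$, then so is every term of $\rho X$. Hence the presentation can be iterated, and Corollary~\ref{cor:projective-dimension} applies to every $\rho^j X$.

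\emph{Case $r=1$.} Apply $\Hom_B(-,Y)$ to \eqref{eq:canonical-presentation}. Since $\mathcal Q(X)$ is projective, this gives
\[
  \Hom_B(\mathcal Q(X),Y)\xrightarrow{\iota_X^*}\Hom_B(\rho X,Y)\longrightarrow\Ext_B^1(X,Y)\longrightarrow0,
\]
so $\Ext^1_B(X,Y)\cong\operatorname{coker}(\iota_X^*)$. We then identify this cokernel with homotopy classes as follows.

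First, a chain map $f\colon\rho X\to Y$ is a family of $A$-maps $f^i\colon X^{i-1}\to Y^i$ for $1\le i\le N$, with $f^{i+1}d_X^{i-1}=-d_Y^i f^i$. This is exactly a chain map $X\to Y[1]$ with the sign convention $d_{Y[1]}=-d_Y$. The degree $N+1$ component is automatically zero because $Y^{N+1}=0$, which is why deleting the top term in $\rho X$ loses nothing. This gives a natural bijection $\Hom_B(\rho X,Y)\cong\Hom_{\Cb}(X,Y[1])$.

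Second, a chain map out of a disk $D_i(U)$ is determined by an arbitrary $A$-map $U\to Y^{i}$. So $\Hom_B(\mathcal Q(X),Y)\cong\prod_{i=0}^{N-1}\Hom_A(X^i,Y^i)\times\Hom_A(X^N,Y^N)$.

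Third, composing with $\iota_X$ sends a family $(h^i)$ to the map with components $u\mapsto h^{i-1}(u)-d_Y^{i-1}h^{i-1}\ldots$. More precisely, one checks that the image consists of the maps of the form $h^{i}d_X^{i-1}\pm d_Y^{i-1}h^{i-1}$ (with $h^i\colon X^i\to Y^{i-1}$ after reindexing), i.e. exactly the null-homotopic chain maps $X\to Y[1]$. Here the summand $S_N(X^N)$ contributes the last homotopy component, and again there is no constraint in degree $N+1$. Hence $\operatorname{coker}(\iota_X^*)\cong\Hom_{\Kb}(X,Y[1])$, naturally in $X$ and $Y$.

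\emph{Induction for $r\ge2$.} Since $\mathcal Q(X)$ is projective, the long exact sequence gives $\Ext_B^r(X,Y)\cong\Ext_B^{r-1}(\rho X,Y)$. By induction, the right side is $\Hom_{\Kb}(\rho X,Y[r-1])$. It remains to show
\[
  \Hom_{\Kb}(\rho X,Y[r-1])\cong\Hom_{\Kb}(X,Y[r])\quad\text{for } r\ge2.
\]
As in the first step, $\rho X$ is $X[-1]$ with its degree $N+1$ term removed. That term is $X^N$, placed in degree $N+1$. Any chain map or homotopy $X[-1]\to Y[r-1]$ touching degree $N+1$ lands in $Y^{N+r}=0$ or $Y^{N+r-1}$. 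For $r\ge2$ both vanish: for homotopies the relevant target is $Y^{N+r-2}$, which is zero exactly when $r\ge 3$, so for $r=2$ this needs checking. In the case $r=2$, a homotopy component $X^{N}\to Y^{N}$ in degree $N+1$ of $X[-1]$ does occur. But truncating $X[-1]$ to $\rho X$ is a degreewise split quotient, $X[-1]\to\rho X$ with kernel the stalk $S(X^N)$ in degree $N+1$, not a subcomplex. So the cleanest route is a triangle argument: there is a triangle $\rho X\to X[-1]\to X^N[-N-1]\to$ in $\Kb(A\text{-}\mathrm{mod})$. Applying $\Hom_{\Kb}(-,Y[r-1])$, the outer terms $\Hom(X^N[-N-1],Y[r-1+s])$ for $s=0,1$ are $\Hom_A(X^N,Y^{N+r+s})=0$ when $r\ge1$. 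This yields the isomorphism for every $r\ge1$, including $r=1$ as a consistency check.

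I expect the main obstacle to be the sign and edge bookkeeping: the sign matching between $\rho$ and $[-1]$, the identification of the image of $\iota_X^*$ with null-homotopic maps including the top summand $S_N$, and the correct direction of the truncation triangle. Naturality follows because every identification is induced by the natural sequence \eqref{eq:canonical-presentation}.
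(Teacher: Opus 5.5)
Your $r=1$ step is correct but takes a different route from the paper. The paper works with extensions directly. Every extension of $X$ by $Y$ is termwise split, the off-diagonal part of the differential is a chain map $X\to Y[1]$, and changing the splitting changes it by a null-homotopic map. You instead read $\Ext^1_B(X,Y)$ off the canonical presentation as $\operatorname{coker}(\iota_X^*)$. Your displayed formula for the image is garbled, but the identification is right once corrected. For the map $\mathcal Q(X)\to Y$ given by $(h^i\colon X^i\to Y^i)_{0\le i\le N}$, its composite with $\iota_X$ has components $d_Y^{i-1}h^{i-1}-h^id_X^{i-1}$. Setting $s=-h$, this is exactly a general null-homotopic map $X\to Y[1]$, and the $S_N(X^N)$ summand supplies the last homotopy component. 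Your route gives additivity and naturality for free. The paper, by contrast, only asserts compatibility with Baer sums and naturality.

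The inductive step is wrong as written, though it is easy to repair. Your first count was correct: the degree-$(N+1)$ component of a homotopy $X[-1]\to Y[r-1]$ is a map $X^N\to Y[r-1]^{N}=Y^{N+r-1}$. It does not land in $Y^{N+r-2}$. So it vanishes for every $r\ge 2$, including $r=2$. Hence chain maps and homotopies from $\rho X$ and from $X[-1]$ into $Y[r-1]$ coincide, which already gives $\Hom_{\Kb}(\rho X,Y[r-1])\cong\Hom_{\Kb}(X,Y[r])$. This is the paper's truncation argument; the paper shifts $r-1$ times at once and compares $\rho^{r-1}X$ with $X[-r+1]$ against $Y[1]$.

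The triangle you substitute is misoriented. Since $\rho X$ is a quotient complex of $X[-1]$, the degreewise split sequence $0\to X^N[-N-1]\to X[-1]\to\rho X\to 0$ gives the triangle $X^N[-N-1]\to X[-1]\to\rho X\to X^N[-N]$. The degreewise inclusion $\rho X\to X[-1]$ is not a chain map unless $d_X^{N-1}=0$. With the correct triangle, the outer terms are $\Hom_{\Kb}(X^N[-N-1],Y[r-1])$ and $\Hom_{\Kb}(X^N[-N],Y[r-1])$. The second vanishes only because $Y^{N+r-1}=0$, that is, only for $r\ge 2$.

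Your ``consistency check'' at $r=1$ is therefore false. Take $N=1$, $X=D_0(A)$ and $Y=S_1(A)$. Then $\rho X=S_1(A)$, so $\Hom_{\Kb}(\rho X,Y)\cong\Hom_A(A,A)\neq 0$, while $X[-1]$ is contractible. Replace the triangle by the direct degree count, or by the correctly oriented triangle. The induction then goes through for all $r\ge 2$, and $r>N$ needs no separate treatment.
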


\begin{proof}
Consider an extension $0\longrightarrow Y\longrightarrow E\longrightarrow X\longrightarrow0$ of $B$-modules. Under Lemma~\ref{lem:complex-module-equivalence}, this becomes a degreewise exact sequence of complexes of $A$-modules. Since every $X^i$ is projective over $A$, the sequence $0\longrightarrow Y^i\longrightarrow E^i\longrightarrow X^i \longrightarrow0$ splits in $A\text{-}\mathrm{mod}$ for every $i$. Choose, for each $i$, an $A$-linear
splitting $s_i\colon X^i\to E^i$ of $\pi^i$, i.e., $\pi^i s_i=1_{X^i}$.
Then $E^i\cong Y^i\oplus X^i$, and in this decomposition the differential
of $E$ takes the form
$$
d_E^i=\begin{pmatrix} d_Y^i & f^i\\ 0 & d_X^i \end{pmatrix},
$$
where the off-diagonal term $f^i\colon X^i\to Y^{i+1}$ is uniquely
determined by $d_E^i s_i=s_{i+1}d_X^i+\iota_{i+1}f^i$. The complex
condition $d_E^{i+1}d_E^i=0$ amounts to $f^{i+1}d_X^i+d_Y^{i+1}f^i=0$, i.e., $f=(f^i)_i$ is a chain map $X\to Y[1]$.

Replacing $s_i$ by $s'_i=s_i+\iota_i h_i$ for an arbitrary $A$-linear map $h_i\colon X^i\to Y^i$ changes $f$ by $(f')^i=f^i+d_Y^i h_i-h_{i+1}d_X^i$, so $f'-f$ is null-homotopic; hence the homotopy class of $f$ in $\Hom_{\Kb(A\text{-}\mathrm{mod})}(X,Y[1])$ is independent of the choice of splittings. Conversely, every chain map $f\colon X\to Y[1]$ yields a termwise split extension $0\to Y\to E\to X\to 0$ by endowing $E^i=Y^i\oplus X^i$ with the differentials displayed above, the chain map condition being precisely the complex condition for $E$. 
Under the present hypothesis, every extension of $X$ by $Y$ is degreewise split as a sequence of $A$-modules. The two constructions above are inverse and therefore $\Ext_B^1(X,Y)\cong\Hom_{\Kb(A\text{-}\mathrm{mod})}(X,Y[1])$.

This correspondence respects Baer sums and is natural in both variables.

For $1\leq r\leq N$, dimension shifting along
\eqref{eq:canonical-presentation} gives
\[
  \Ext_B^r(X,Y)
  \cong
  \Ext_B^1(\rho^{r-1}X,Y)
  \cong
  \Hom_{\Kb(A\text{-}\mathrm{mod})}
  \bigl(\rho^{r-1}X,Y[1]\bigr).
\]
The complex $\rho^{r-1}X$ is the brutal truncation in degrees at most $N$
of $X[-r+1]$.  Since $Y$ is supported in $[0,N]$, the shifted complex
$Y[1]$ is supported in $[-1,N-1]$ and in particular is zero in every degree
at least $N$.  Consequently, restriction and extension by zero induce a bijection between chain maps $X[-r+1]\longrightarrow Y[1]$ and $\rho^{r-1}X\longrightarrow Y[1]$. The same assertion holds for homotopies: a homotopy component in degree
$i$ has target $Y^i$, and therefore vanishes for $i>N$.  It follows that
\[
  \Hom_{\Kb(A\text{-}\mathrm{mod})}
  \bigl(\rho^{r-1}X,Y[1]\bigr)
  \cong
  \Hom_{\Kb(A\text{-}\mathrm{mod})}
  \bigl(X[-r+1],Y[1]\bigr)
  \cong
  \Hom_{\Kb(A\text{-}\mathrm{mod})}(X,Y[r]).
\]

If $r>N$, then the extension group vanishes by
Corollary~\ref{cor:projective-dimension}.  The homotopy group also vanishes,
since $X$ is supported in $[0,N]$ while $Y[r]$ is supported in
$[-r,N-r]$, and these intervals are disjoint.  This proves
\eqref{eq:ext-comparison} for every $r\geq1$.
\end{proof}

\section{From presilting complexes to pretilting modules}

We now turn the preceding comparison into a transfer theorem.  

\begin{theorem}
\label{thm:transfer}
Let $k$ be algebraically closed, let $A$ be a finite-dimensional basic
$k$-algebra, and let $P^\bullet\in\Kb(\proj A)$ be a basic presilting complex such
that
\[
  |P^\bullet|=n(A)
  \qquad\text{and}\qquad
  \thick(P^\bullet)\neq\Kb(\proj A).
\]
Then there exist a finite-dimensional basic $k$-algebra $B$ and a faithful basic pretilting $B$-module $M$ such that $|M|=n(B)$, and $M$ is not a direct summand of any tilting $B$-module.
\end{theorem}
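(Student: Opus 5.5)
Shift $P^\bullet$ so that it is supported in $[0,N]$ for some $N\geq1$ (taking $N$ larger if needed), take $B=A\otimes_k R_N$, and regard $P^\bullet$ as a $B$-module via Lemma~\ref{lem:complex-module-equivalence}. The candidate is
\[
  M = P^\bullet \oplus \bigoplus_{i=0}^{N-1} D_i(A),
\]
made basic. Since $k$ is algebraically closed and $A$ and $R$ are basic, $B$ is basic. Its indecomposable projectives are $D_i(Ae)$ for $0\le i<N$ and $S_N(Ae)$, so $n(B)=(N+1)n(A)$. The summands $D_i(Ae)$ contribute $N\,n(A)$ indecomposables, and the indecomposable summands of $P^\bullet$ as a complex give $n(A)$ more. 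These last are indecomposable as $B$-modules and pairwise non-isomorphic, and none is a disk. This uses that a basic presilting complex may be taken without contractible summands, since we pass to minimal complexes. Hence $|M|=n(B)$.

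Next I check that $M$ is pretilting. Finite projective dimension follows from Corollary~\ref{cor:projective-dimension}, since all terms are projective over $A$. For self-orthogonality, Proposition~\ref{prop:ext-comparison} gives $\Ext_B^r(M,M)\cong\Hom_{\Kb}(M,M[r])$. The disks are contractible, so only $\Hom_{\Kb}(P^\bullet,P^\bullet[r])$ remains, and this vanishes by the presilting hypothesis. Faithfulness: $\bigoplus_i D_i(A)$ together with $P^\bullet$ should give a faithful module. The annihilator of $\bigoplus_i D_i(A)\cong\bigoplus_{i<N}B\varepsilon_i$ is the set of elements killing every $\varepsilon_i$, $i<N$, so it lies in the corner $\varepsilon_N B\varepsilon_N = A\otimes\varepsilon_N$. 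Such an element must also act as zero on $(P^\bullet)^N$. That term is a nonzero projective and may be assumed to contain a copy of each projective, or at least be faithful over $A$. If not, I would instead add $S_N(A)$ and drop some disk; this is the point where I am least sure which summand to include for faithfulness without breaking the count.

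The main step is showing that $M$ is not a summand of a tilting module. Suppose $M\oplus M'$ is tilting. Since $|M|=n(B)$, we get $\add(M\oplus M')=\add M$, so $M$ itself is tilting and (T3) gives a coresolution $0\to B\to T_0\to\cdots\to T_s\to0$ with $T_j\in\add M$. Every short exact sequence of $B$-modules with $A$-projective terms is degreewise split, so it gives a triangle in $\Kb(\proj A)$. Splitting the coresolution into short exact sequences therefore places $B$ in $\thick(M)\subseteq\Kb(\proj A)$. Since the disks are zero in $\Kb$, this says $\thick(P^\bullet)$ contains $S_N(A)=A[-N]$. Hence $\thick(P^\bullet)=\Kb(\proj A)$, a contradiction. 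The obstacle is bookkeeping: one must verify that intermediate cokernels in the coresolution still have $A$-projective terms, which follows because $\add M$ has $A$-projective terms and the sequences split degreewise.
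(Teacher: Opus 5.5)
Your rank count, pretilting check and non-completion argument are correct and essentially follow the paper. The one variation is harmless: you obtain the triangles directly in $\Kb(\proj A)$, inducting from the right end of the coresolution so that every intermediate cokernel has $A$-projective terms. The paper instead passes through $\Db(A\text{-}\mathrm{mod})$ and uses that $\Kb(\proj A)\to\Db(A\text{-}\mathrm{mod})$ is fully faithful.

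The gap is faithfulness, which you leave unresolved, and your proposed repairs do not work.
\begin{itemize}
\item You conflate killing the generators with killing the module. The set of $x\in B$ with $x(1_A\otimes\varepsilon_i)=0$ for all $i<N$ is $B(1_A\otimes\varepsilon_N)=A\otimes\varepsilon_N\neq0$. It is not the annihilator of $Q=\bigoplus_{i<N}B(1_A\otimes\varepsilon_i)$, which consists of the $x$ with $xQ=0$. This conflation is what hides the answer.
\item Your proposed way of killing $A\otimes\varepsilon_N$, faithfulness of $(P^\bullet)^N$ over $A$, is unjustified. That term can be a single indecomposable projective. It is even $0$ if you enlarge $N$ beyond the support, which you explicitly allow.
\item The fallback of adding $S_N(A)$ would break the construction. $S_N(A)\cong A[-N]$ is not contractible, so $\Ext_B^N(P^\bullet,S_N(A))\cong\Hom_{\Kb(\proj A)}(P^\bullet,A)$ is in general nonzero. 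In any case the thick subcategory generated by the underlying complex of $M$ would then contain $A$, and your final contradiction would disappear.
\end{itemize}

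The missing observation is that the disk $D_{N-1}(A)$ is nonzero in degree $N$. Its degree-$N$ component is $A\otimes k\alpha_{N-1}\subseteq B(1_A\otimes\varepsilon_{N-1})$. Moreover $(a\otimes\varepsilon_N)(1_A\otimes\alpha_{N-1})=a\otimes\alpha_{N-1}$, which vanishes only if $a=0$. Together with your correct containment $\operatorname{Ann}_B(Q)\subseteq A\otimes\varepsilon_N$, this shows that $Q$ alone is faithful. Hence $M=P^\bullet\oplus Q$ is faithful with no condition on $P^\bullet$ and no change of summands.

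The paper phrases the same fact as faithfulness of $R\varepsilon$ over $R$, using $\alpha_{N-1}\in R\varepsilon$ and $\varepsilon_N\alpha_{N-1}=\alpha_{N-1}$. It then tensors this with the faithful regular representation of $A$.
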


\begin{proof}
Set $n=n(A)$. Denote by $P^\bullet_1,\ldots,P^\bullet_n$ the direct summands of $P^\bullet$. We may assume that they are pairwise non-isomorphic minimal projective complexes.  After a common shift,
assume that $P^\bullet$ is supported in the smallest interval $[0,N]$.  We have
$N\geq1$.  Indeed, if $N=0$, then $P^\bullet$ is a basic projective module with
$n$ indecomposable summands.  It is therefore isomorphic to the basic
regular module $A$ and is silting, contrary to the hypothesis.

Let $R=R_N$ be the algebra from Section~2, put $B=A\otimes_k R$, and regard
$P^\bullet=\oplus_{j=1}^nP_j^{\bullet}$ as a $B$-module.  Denote
this module by $P=\oplus_{j=1}^nP_j$.  Choose a complete set of pairwise orthogonal
primitive idempotents $e_1,\ldots,e_n$ of $A$, so that
$1_A=e_1+\cdots+e_n$, and set
\[
  Q=
  \bigoplus_{i=0}^{N-1}\bigoplus_{j=1}^n D_i(Ae_j),
  \qquad
  M=P\oplus Q.
\]

Let $\varepsilon_0,\ldots,\varepsilon_N$ be the vertex idempotents of
$R$.  The ideal
$(\rad A)\otimes_k R+A\otimes_k(\rad R)$
is nilpotent, and the corresponding quotient of $B$ is
$(A/\rad A)\otimes_k(R/\rad R)\cong k^{(N+1)n}$.
Thus this ideal is $\rad B$, the algebra $B$ is basic, and
$n(B)=(N+1)n$. Its indecomposable projective left modules are
$D_i(Ae_j)$ for $0\leq i<N$ and $1\leq j\leq n$, and
$S_N(Ae_j)$ for $1\leq j\leq n$.

We next check that $M$ is basic. If some $P_j$
decomposed as a direct sum of two nonzero $B$-modules, then it would
decompose as a direct sum of two nonzero minimal complexes.  Neither
summand is zero in $\Kb(\proj A)$, contradicting the indecomposability of
$P_j$.  Hence the $P_j$ remain indecomposable as $B$-modules.  A
$B$-module isomorphism between two of them is a chain isomorphism, so they
remain pairwise non-isomorphic. Finally, no $P_j$ is isomorphic to a
summand of $Q$, because every summand of $Q$ is contractible as a complex,
whereas $P_j$ is a nonzero minimal complex. Therefore $M$ is basic and
\[
  |M|=n+Nn=(N+1)n=n(B).
\]

Corollary~\ref{cor:projective-dimension} gives
$\pd_BP\leq N$, while $Q$ is projective over $B$.  For every
$r>0$, Proposition~\ref{prop:ext-comparison} gives
\[
  \Ext_B^r(P,P)
  \cong
  \Hom_{\Kb(\proj A)}(P,P[r])
  =0
\]
because $P$ is presilting. The underlying complex of $Q$ is contractible, so the same proposition gives $\Ext_B^r(P,Q)=0$. Since $Q$ is projective as a $B$-module, we also have $\Ext_B^r(Q,M)=0$. Thus $M$ is pretilting.

It remains to prove faithfulness.  
Put $\varepsilon=\varepsilon_0+\cdots+\varepsilon_{N-1}$, noting that $\varepsilon_N$ does not appear here.
Then
\[
  Q\cong B(1_A\otimes\varepsilon)
  \cong A\otimes_k R\varepsilon.
\]
To show that $Q$ is faithful it suffices to prove that the action map
$$
  \lambda:\ B=A\otimes_k R\longrightarrow
  \operatorname{End}_k(Q)=\operatorname{End}_k(A\otimes_k R\varepsilon),
  \qquad
  b\longmapsto(q\mapsto bq),
$$
is injective.  Note that
this action is factorwise: for $a\in A$, $r\in R$, $u\in A$ and
$v\in R\varepsilon$,
$$
  (a\otimes r)\cdot(u\otimes v)=au\otimes rv.
$$
The regular left $A$-module is faithful. We show that the left $R$-module
$R\varepsilon$ is faithful. Indeed, suppose that
$xR\varepsilon=0$.  Since $\varepsilon_i\in R\varepsilon$ for $i<N$,
the path-basis expansion of $x$ shows that $x=c\varepsilon_N$ for some
$c\in k$.  But $\alpha_{N-1}\in R\varepsilon$ and
$\varepsilon_N\alpha_{N-1}=\alpha_{N-1}$, so $c=0$. Hence the left regular
representation $A\to\operatorname{End}_k(A)$
and the representation $R\to\operatorname{End}_k(R\varepsilon)$ are both injective.  The action map $\lambda$
factors as
$$
  A\otimes_k R
  \hookrightarrow
  \operatorname{End}_k(A)\otimes_k
  \operatorname{End}_k(R\varepsilon)
  \xrightarrow{\sim}
  \operatorname{End}_k(A\otimes_k R\varepsilon),
$$
where the first arrow is the tensor product of the two injective
representations above, and the second arrow is the canonical map
$\phi\otimes\psi\mapsto\bigl(u\otimes v\mapsto\phi(u)\otimes\psi(v)\bigr)$,
which is bijective because $A$ and $R\varepsilon$ are finite-dimensional
over $k$. Thus $\lambda$ is injective and $Q$ is a faithful $B$-module.

Suppose that $M$ were tilting.  There would be an exact sequence
\[
  0\longrightarrow B\longrightarrow M_0\longrightarrow M_1
  \longrightarrow\cdots\longrightarrow M_s\longrightarrow0
\]
with $M_i\in\add(M)$ for every $i$. Let \(\mathcal U:B\text{-}\mathrm{mod}\longrightarrow\Cb(A\text{-}\mathrm{mod})\) be the exact functor which sends a $B$-module to its underlying complex of $A$-modules. Decomposing the preceding exact sequence into short exact sequences and passing to the associated triangles gives \(\mathcal U(B)\in\thick_{\Db(A\text{-}\mathrm{mod})}(\mathcal U(M))\). Since $\mathcal U(Q)$ is contractible, it is zero in the derived category, and hence $\mathcal U(M)\cong P^\bullet$ in $\Db(A\text{-}\mathrm{mod})$.  Moreover, both $\mathcal U(B)$ and $P^\bullet$ are perfect. The canonical functor $\Kb(\proj A)\to\Db(A\text{-}\mathrm{mod})$ is fully faithful and identifies $\Kb(\proj A)$ with the perfect subcategory.  Therefore the preceding inclusion gives \(\mathcal U(B)\in\thick_{\Kb(\proj A)}(P^\bullet)\).
On the other hand, the decomposition of the regular $R$-module into its
indecomposable projectives gives
\[
  \mathcal U(B)
  \cong
  \bigoplus_{i=0}^{N-1}D_i(A)\oplus S_N(A)
  \cong
  A[-N]
\]
in $\Kb(\proj A)$.  Hence $A\in\thick(P^\bullet)$, and therefore
$\thick(P^\bullet)=\Kb(\proj A)$, a contradiction.  Thus $M$ is not tilting.


Finally, suppose that $M$ were a direct summand of a tilting $B$-module
$T$. Let $T^{\mathrm b}$ be the basic representative of $T$; then
$T^{\mathrm b}$ is again tilting and $\add(T^{\mathrm b})=\add(T)$. Since $M$ is
basic, every indecomposable direct summand of $M$ occurs in
$T^{\mathrm b}$. Both $M$ and $T^{\mathrm b}$ have exactly $n(B)$ pairwise
non-isomorphic indecomposable direct summands
\cite[Theorem~1.19]{Miyashita1986}. Hence
\[
  \add(T)=\add(T^{\mathrm b})=\add(M).
\]
The tilting coresolution for $T$ is therefore a coresolution by objects in
$\add(M)$. Since $M$ already has finite projective dimension and is
self-orthogonal, it follows that $M$ is tilting, contradicting the preceding
paragraph. Thus $M$ is not a direct summand of any tilting $B$-module.
\end{proof}

For the geometric application, we also need the following permanence
property.

\begin{lemma}
\label{lem:quasi-hereditary}
If $A$ is quasi-hereditary, then the algebra
$B=A\otimes_k R$ constructed in Theorem~\ref{thm:transfer} is
quasi-hereditary.
\end{lemma}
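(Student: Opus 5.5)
The plan is to show that $R=R_N$ is quasi-hereditary, and then to invoke the standard fact that a tensor product of quasi-hereditary algebras over an algebraically closed field is again quasi-hereditary. Here the product carries the lexicographic order on $\{1,\ldots,n\}\times\{0,\ldots,N\}$, and the standard modules are $\Delta(j)\otimes_k\Delta(i)$. If I prefer to avoid citing that fact, I will give a self-contained argument through heredity chains, as follows.

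\textbf{Step 1: $R$ is quasi-hereditary.} The quiver $\Gamma_N$ has no oriented cycles, so $R$ is directed and hence quasi-hereditary. Concretely, order the vertices $N<N-1<\cdots<0$, so that the largest weight is taken first, and use the idempotent ideals $J_t=R(\varepsilon_0+\cdots+\varepsilon_{t-1})R$. Since $\varepsilon_i R\varepsilon_i=k$ for every $i$ (there are no loops), each subquotient $J_{t+1}/J_t$ is generated by an idempotent $\bar\varepsilon_t$ with $\bar\varepsilon_t(R/J_t)\bar\varepsilon_t=k$. Projectivity of $J_{t+1}/J_t$ as a left $R/J_t$-module follows from the directedness: the ideal is isomorphic to $(R/J_t)\bar\varepsilon_t\otimes_k \bar\varepsilon_t(R/J_t)$, because no path through vertex $t$ factors back through an earlier vertex. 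The same check shows $\rad(J_{t+1}/J_t)\cdot(J_{t+1}/J_t)=0$.

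\textbf{Step 2: tensor the heredity chains.} Let $0=I_0\subset I_1\subset\cdots\subset I_m=A$ be a heredity chain for $A$ with $I_{s}=Ae_{(s)}A$, and let $0=J_0\subset\cdots\subset J_{N+1}=R$ be the chain from Step 1. I will form the chain in $B$ given by
\[
  K_{(s,t)}=I_{s}\otimes_k R+I_{s+1}\otimes_k J_t ,
\]
refined lexicographically in $(s,t)$. Each $K_{(s,t)}$ is generated by an idempotent of the form $e\otimes1+e'\otimes f$. Modulo $I_s\otimes R$, that is, inside $(A/I_s)\otimes R$, the subquotient is isomorphic to $(I_{s+1}/I_s)\otimes_k(J_{t+1}/J_t)$. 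Its projectivity over $(A/I_s)\otimes_k(R/J_t)$ follows because a tensor product of projective modules over the two factors is projective over the tensor product algebra. The idempotent-cut condition $\bar e\,(\text{quotient})\,\bar e$ being semisimple also holds, because a tensor product of semisimple algebras over an algebraically closed field $k$ is semisimple; this is exactly where the hypothesis $k=\bar k$ from Theorem~\ref{thm:transfer} enters. The radical condition similarly factors as a tensor product.

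\textbf{Main obstacle.} The delicate point is the bookkeeping in Step 2: checking that the quotient of $B$ by an intermediate ideal really is $(A/I_s)\otimes(R/J_t)$ up to the correct identification, so that the subquotient is the tensor product of the two layers. Lexicographic refinement is designed to ensure this; at each stage one only ever quotients by $I_s\otimes R$ followed by a layer of $J$. To keep Step 2 manageable, I will instead use the equivalent characterization of quasi-heredity via standard modules. This requires that the modules $\Delta(j)\otimes\Delta(i)$ have endomorphism ring $k$, that their composition factors are smaller in the lexicographic order, and that $B$ has a $\Delta$-filtration. All three follow directly from the tensor product of the $\Delta$-filtrations of ${}_AA$ and ${}_RR$, together with the Künneth formula $\Ext^1_B(X\otimes Y,X'\otimes Y')\cong(\Ext^1_A\otimes\Hom_R)\oplus(\Hom_A\otimes\Ext^1_R)$.
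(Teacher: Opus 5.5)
Your main line is exactly the paper's proof: $R$ is directed, hence quasi-hereditary, and a tensor product of quasi-hereditary algebras is quasi-hereditary for the product order on weights. The paper cites Dlab--Ringel and Chan for these two facts, and your lexicographic order is harmless because it refines the product order and leaves the standard modules $\Delta(j)\otimes_k\Delta(i)$ unchanged. One caution about your optional self-contained Step~2: the quotient $B/K_{(s,t)}$ is $(A/I_s)\otimes_k R$ modulo $(I_{s+1}/I_s)\otimes_k J_t$, not $(A/I_s)\otimes_k(R/J_t)$, so the identification you flag as the main obstacle is false, and projectivity of the layer must be checked over this larger quotient instead. It still holds there: $(I_{s+1}/I_s)\bar e'=(A/I_s)\bar e'$ for an idempotent $\bar e'$ of the new layer, so the corresponding projective of $B/K_{(s,t)}$ is exactly $(A/I_s)\bar e'\otimes_k(R/J_t)\bar\varepsilon_t$. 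Your fallback via the modules $\Delta(j)\otimes_k\Delta(i)$ works as sketched.
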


\begin{proof}
The algebra $R$ is directed and hence quasi-hereditary
\cite{DlabRingel1992}.  The tensor product of two quasi-hereditary algebras
is quasi-hereditary with respect to the product order on their weight
posets \cite[Section~2]{Chan2014}.  Therefore $A\otimes_k R$ is
quasi-hereditary.
\end{proof}

\section{The full-rank counterexample}

We recall the full-rank presilting complex furnished by the work of Krah,
Hille--Perling, and Kalck. Let $X$ be the blow-up of
$\mathbb P^2_{\C}$ at ten points in general position. Krah
\cite{Krah2024} constructed a non-full exceptional sequence
$(E_1,\ldots,E_{13})$ of line bundles on $X$, and
$K_0(X)\cong\mathbb Z^{13}$. As explained by Kalck
\cite{Kalck2023}, after suitably shifting the $E_i$, their direct sum is a
basic presilting object $E$ with thirteen pairwise non-isomorphic
indecomposable direct summands. Since shifting does not change the thick
subcategory generated by the $E_i$ and the exceptional sequence is not full,
\[
  \thick(E)\neq\Db(\operatorname{coh}X).
\]

By Hille and Perling \cite{HillePerling2014}, the surface $X$ admits a
tilting bundle whose endomorphism algebra is quasi-hereditary. Choose such a
tilting bundle $\mathcal T_0$, let $\mathcal T$ be its basic representative,
and set
\[
  A=\operatorname{End}_{X}(\mathcal T)^{\mathrm{op}}.
\]
Since $\add(\mathcal T)=\add(\mathcal T_0)$, the bundle $\mathcal T$ is
still tilting, and its endomorphism algebra is Morita equivalent to that of
$\mathcal T_0$. The opposite is needed because
$\mathbf R\!\operatorname{Hom}_{X}(\mathcal T,-)$ naturally takes values
in right $\operatorname{End}_{X}(\mathcal T)$-modules, equivalently in left
$A$-modules. Since quasi-heredity is Morita invariant and is preserved under
passage to the opposite algebra, $A$ is a finite-dimensional basic
quasi-hereditary $\C$-algebra; in particular, it has finite global dimension.
Moreover, $A$ is connected: a nontrivial product decomposition of $A$ would
induce a nontrivial orthogonal direct-sum decomposition of
$\Db(\operatorname{coh}X)$ and hence a decomposition of $X$ into nonempty
open-and-closed subschemes, contrary to the connectedness of $X$. Thus the
tilting functor yields triangle equivalences
\[
  \Db(\operatorname{coh}X)
  \xrightarrow{\sim}
  \Db(A\text{-}\mathrm{mod})
  \xrightarrow{\sim}
  \Kb(\proj A).
\]
It also induces an isomorphism on Grothendieck groups. Since the rank of
$K_0(A\text{-}\mathrm{mod})$ equals the number $n(A)$ of isomorphism classes
of simple $A$-modules, we obtain
\[
  n(A)=\operatorname{rank}K_0(A\text{-}\mathrm{mod})
      =\operatorname{rank}K_0(X)=13.
\]
Transporting $E$ along the displayed equivalences gives a basic presilting
complex $P^\bullet\in\Kb(\proj A)$ such that
\[
  |P^\bullet|=13=n(A)
  \qquad\text{and}\qquad
  \thick(P^\bullet)\neq\Kb(\proj A).
\]
Summarizing the preceding discussion, we have the following result; see
\cite{Kalck2023}.

\begin{proposition}
\label{prop:geometric-input}
There exist a finite-dimensional basic connected quasi-hereditary $\C$-algebra $A$
and a basic presilting complex $P^\bullet\in\Kb(\proj A)$ such that
\[
  |P^\bullet|=n(A)=13
  \qquad\text{and}\qquad
  \thick(P^\bullet)\neq\Kb(\proj A).
\]
\end{proposition}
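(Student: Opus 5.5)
The proof will simply assemble the geometric discussion preceding the statement. We take $X$ to be the blow-up of $\mathbb P^2_{\C}$ at ten points in general position, and $(E_1,\ldots,E_{13})$ to be Krah's non-full exceptional sequence of line bundles. Following Kalck, we choose shifts $E_i[m_i]$ so that $E=\bigoplus_i E_i[m_i]$ is presilting. This is the first point to verify: exceptionality gives $\Hom(E_j,E_i[t])=0$ for $j>i$ and all $t$, so only the pairs with $i<j$ matter. Since the $E_i$ are line bundles on a surface, $\Hom(E_i,E_j[t])$ is concentrated in degrees $t\in\{0,1,2\}$. Choosing decreasing shifts with sufficiently large gaps, for example $m_i=3(13-i)$ or an ordering adapted so that the needed Hom's land in nonpositive degrees, makes $\Hom(E,E[t])=0$ for all $t>0$.

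The summands are pairwise non-isomorphic because they are exceptional objects in a sequence. They are indecomposable because each has endomorphism ring $\C$. Shifting preserves thick closure, so $\thick(E)=\thick(E_1,\ldots,E_{13})$, which is a proper subcategory of $\Db(\operatorname{coh}X)$ since the sequence is not full.

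Next we invoke Hille--Perling to obtain a tilting bundle with quasi-hereditary endomorphism algebra, and pass to its basic part $\mathcal T$. We set $A=\operatorname{End}_X(\mathcal T)^{\mathrm{op}}$; this is basic and quasi-hereditary, because quasi-heredity is Morita invariant and stable under taking opposites. Connectedness follows from connectedness of $X$ via block decompositions of the derived category. Since $A$ has finite global dimension, $\mathbf R\!\operatorname{Hom}_X(\mathcal T,-)$ gives triangle equivalences $\Db(\operatorname{coh}X)\simeq\Db(A\text{-}\mathrm{mod})\simeq\Kb(\proj A)$. Comparing Grothendieck groups yields $n(A)=\operatorname{rank}K_0(X)=13$, where the last equality holds because $K_0$ of a rational surface blown up at $r$ points has rank $r+3$.

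Finally we transport $E$ to obtain $P^\bullet$. Triangle equivalences preserve the presilting vanishing, indecomposability, non-isomorphism of summands, and properness of thick closures. Hence $P^\bullet$ is basic presilting with $|P^\bullet|=13=n(A)$ and $\thick(P^\bullet)\neq\Kb(\proj A)$.

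The main obstacle is the input from Krah: that the sequence is exceptional, consists of line bundles, and is not full. This is deep and will be cited rather than reproved, together with Kalck's observation on shifting. The only delicate bookkeeping on our side is the choice of shifts giving presilting, which I would handle by the degree-range argument above.
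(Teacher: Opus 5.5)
Your route is the same as the paper's. You take Krah's non-full exceptional sequence of thirteen line bundles on the blow-up $X$ of $\mathbb P^2_{\C}$ at ten general points, shift it to a presilting object, and use a basic Hille--Perling tilting bundle to get a basic connected quasi-hereditary $A$ with $n(A)=\operatorname{rank}K_0(X)=13$. You then transport along $\Db(\operatorname{coh}X)\simeq\Kb(\proj A)$. The paper simply cites Kalck for the choice of shifts. The one piece of bookkeeping you supply yourself has the direction of the shifts reversed, so as written that step does not produce a presilting object.

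Put $F_i=E_i[m_i]$, so that $\Hom(F_i,F_j[t])=\Hom(E_i,E_j[t+m_j-m_i])$.
\begin{itemize}
\item For $i>j$ this vanishes by exceptionality, and for $i=j$ it vanishes when $t\neq0$.
\item For $i<j$ it can be nonzero only when $t+m_j-m_i\in\{0,1,2\}$. Presilting requires every such $t$ to be $\le 0$, i.e.\ $m_j\ge m_i+2$ whenever $i<j$.
\end{itemize}
So the shifts must \emph{increase} along the sequence, e.g.\ $m_i=2i$. Your parenthetical criterion (the relevant Hom's should land in nonpositive degrees) is the right one, but your explicit choice $m_i=3(13-i)$ violates it. That choice places each group $\Hom(E_i,E_j[s])$ with $i<j$ in degree $t=3(j-i)+s>0$, exactly where presilting forbids it.

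Some of these groups are genuinely nonzero. Otherwise the classes $[E_i]$ would form a $\mathbb Q$-basis of $K_0(X)_{\mathbb Q}$ on which the Euler form is the identity, hence positive definite. That is impossible: for a point $p$ one has $[\mathcal O_p]\neq0$ (since $\chi(\mathcal O_X,\mathcal O_p)=1$) but $\chi(\mathcal O_p,\mathcal O_p)=0$.

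With the sign corrected, your degree-range argument is a valid self-contained substitute for the citation of Kalck. The rest of your proposal agrees with the paper: non-isomorphism and indecomposability of the summands, properness of $\thick(E)$, the Hille--Perling step, the rank count, and the transport.
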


\begin{proof}[Proof of Theorem~\ref{thm:Cn}]
Apply Theorem~\ref{thm:transfer} to the algebra and presilting complex in
Proposition~\ref{prop:geometric-input}.  This produces a finite-dimensional
basic $\C$-algebra $B$ and a faithful basic pretilting $B$-module $M$ with
$|M|=n(B)$ which is not a direct summand of any tilting $B$-module.
Lemma~\ref{lem:quasi-hereditary} shows that $B$ is quasi-hereditary.
Finally, $B$ is connected.  Indeed, the description of $\rad B$ in the proof
of Theorem~\ref{thm:transfer} shows that the Gabriel quiver of
$B=A\otimes_k R$ contains, at each vertex of the connected quiver of $R$, a
copy of the Gabriel quiver of $A$, and, at each vertex of the Gabriel quiver
of $A$, a copy of the connected quiver of $R$.  Since $A$ and $R$ are
connected, so is $B$.
\end{proof}

\section{Consequences for Wakamatsu tilting modules}
\label{sec:wakamatsu-consequences}
We next place the full-rank example in Enomoto's framework.  Note that Enomoto uses
finitely generated right modules; throughout this section we use the
corresponding left-module terminology, obtained by passing to opposite
algebras.  For a self-orthogonal left $A$-module $T$, put
\[
  {}^{\perp}T
  =
  \{X\in A\text{-}\mathrm{mod}\mid
    \Ext_A^i(X,T)=0\text{ for every }i>0\}.
\]
Let $\mathcal X_T$ be the class of modules $X$ admitting an exact sequence
\[
  0\longrightarrow X\longrightarrow T^0\longrightarrow T^1
  \longrightarrow T^2\longrightarrow\cdots
\]
with $T^i\in\add(T)$ and with every successive image in ${}^{\perp}T$.
Following \cite[Definition~2.7]{Enomoto2023}, the module $T$ is Wakamatsu
tilting if it is self-orthogonal and $A\in\mathcal X_T$.  We also recall that
a subcategory $\mathcal C\subseteq A\text{-}\mathrm{mod}$ has a
\emph{finite cover} if there exists $C\in\mathcal C$ such that every object
of $\mathcal C$ is an epimorphic image of an object in $\add(C)$; see
\cite[Definition~2.11]{Enomoto2023}.

\begin{lemma}
\label{lem:wakamatsu-finite-global-dimension}
Let $A$ be a finite-dimensional algebra of finite global dimension.  Then every
Wakamatsu tilting module is a tilting module.
\end{lemma}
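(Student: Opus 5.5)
Let $T$ be Wakamatsu tilting, so $T$ is self-orthogonal and there is an exact sequence
\[
  0\longrightarrow A\longrightarrow T^0\xrightarrow{f^0}T^1\xrightarrow{f^1}T^2\longrightarrow\cdots
\]
with $T^i\in\add(T)$. Write $K^0=A$ and $K^{i}=\operatorname{Im}(f^{i-1})$ for $i\geq1$. Each $K^i$ lies in ${}^{\perp}T$, and there are short exact sequences $0\to K^i\to T^i\to K^{i+1}\to0$. We verify (T1)--(T3) in turn; (T2) is part of the hypothesis, and (T1) holds because $\operatorname{gl.dim}A=d<\infty$ forces $\pd_A T\leq d$. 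The real work is (T3): the infinite coresolution must be truncated to a finite one with all terms in $\add(T)$.

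The plan is to show that the sequence splits far enough out. First, $\Ext^i_A(K^j,X)=0$ for all $i>d$ and all $X$. Dimension shifting along the short exact sequences, using that each $T^i$ is $\Ext$-orthogonal to the $K$'s appropriately, gives
\[
  \Ext^1_A(K^{m+1},K^{m})\cong\Ext^{2}_A(K^{m+1},K^{m-1})\cong\cdots\cong\Ext^{m+1}_A(K^{m+1},K^0).
\]
For this chain we need $\Ext^{\geq1}_A(K^{m+1},T^{j})=0$, which holds because $K^{m+1}\in{}^{\perp}T$ and $T^j\in\add T$. Concretely, applying $\Hom(K^{m+1},-)$ to $0\to K^{j-1}\to T^{j-1}\to K^{j}\to0$ gives $\Ext^i(K^{m+1},K^{j})\cong\Ext^{i+1}(K^{m+1},K^{j-1})$ for $i\geq1$. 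Taking $m=d$, the final group is $\Ext^{d+1}_A(K^{d+1},A)=0$.

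Hence the sequence $0\to K^d\to T^d\to K^{d+1}\to0$ splits, so $K^d\in\add(T^d)\subseteq\add T$. Truncating then gives the exact sequence
\[
  0\to A\to T^0\to\cdots\to T^{d-1}\to K^d\to0
\]
with all terms in $\add T$, which is (T3). Indexing must be handled with care: when $d=0$, $A$ is semisimple and the same argument with $m=0$ shows that $A\to T^0$ splits. So $T$ is tilting.

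The main obstacle is bookkeeping: the shifting chain must use only the vanishings that are actually available, namely $K^{m+1}\in{}^{\perp}T$ (and not, say, $K^{j}\in{}^{\perp}T$ in the second variable). The chain uses precisely this, so it should go through.
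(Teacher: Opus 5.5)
Your proof is correct, but it takes a different route from the paper. The paper gives no direct argument. It notes that finite global dimension gives $\pd_A T<\infty$ and $\operatorname{findim}A<\infty$, and then cites Mantese--Reiten \cite[Proposition~4.4(i)]{ManteseReiten2004}.

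You instead carry out the truncation by hand. The chain
\[
\Ext^1_A(K^{d+1},K^d)\cong\cdots\cong\Ext^{d+1}_A(K^{d+1},A)=0
\]
uses only the vanishing that is actually available, namely $K^{d+1}\in{}^{\perp}T$ tested against the terms $T^j\in\add T$. Hence $0\to K^d\to T^d\to K^{d+1}\to0$ splits and $K^d\in\add T$. The case $d=0$ just says $A\in\add T$.

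Your route is self-contained and gives the explicit bound $d$ on the length of the coresolution. The citation is shorter and comes with a more general statement.

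Your argument does not actually lose that generality. Suppose only $\pd_A T<\infty$ and $\operatorname{findim}A=F<\infty$. The sequences $0\to K^m\to T^m\to K^{m+1}\to0$ show inductively that every $K^m$ has finite projective dimension, so $\pd_A K^m\le F$. Running your chain with $m=F$ then gives $\Ext^{F+1}_A(K^{F+1},A)=0$ and the same splitting.
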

\begin{proof}
Let $T$ be Wakamatsu tilting. Since
$\operatorname{gl.dim}A<\infty$, one has
$\pd_A T<\infty$ and
$\operatorname{findim}A<\infty$, where $\operatorname{findim}A=\mathrm{sup}\{\pd_A M | \pd_A M<\infty\}$ is the finitistic dimension of $A$. Hence the result follows from
\cite[Proposition~4.4(i)]{ManteseReiten2004}.
\end{proof}

\begin{corollary}
\label{cor:enomoto-consequences}
Let $B$ and $M$ be as in Theorem~\ref{thm:Cn}, and set
\[
  M^{\perp}
  =
  \{X\in B\text{-}\mathrm{mod}\mid
    \Ext_B^i(M,X)=0\text{ for every }i>0\}.
\]
Then the following statements hold.
\begin{enumerate}[label=\textnormal{(\arabic*)},leftmargin=2.5em]
  \item The module $M$ is not Wakamatsu tilting.
  \item The module $M$ has no Bongartz completion in the sense of
  \cite[Definition~3.9]{Enomoto2023}.
  \item The category $M^{\perp}$ has no finite cover and is not covariantly
  finite in $B\text{-}\mathrm{mod}$.
\end{enumerate}
In particular, $M$ gives a negative answer to \cite[Conjecture~5.9]{Enomoto2023}.
\end{corollary}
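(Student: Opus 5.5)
Part (1) comes first, and everything else follows from it. The algebra $B=A\otimes_{\C}R$ has finite global dimension. Indeed, $A$ is quasi-hereditary, hence of finite global dimension, and $R$ is directed. Over a perfect field the global dimension of a tensor product is at most the sum of the two global dimensions. Alternatively, $B$ is quasi-hereditary by Lemma~\ref{lem:quasi-hereditary}, which already gives finite global dimension. So Lemma~\ref{lem:wakamatsu-finite-global-dimension} applies: if $M$ were Wakamatsu tilting, it would be tilting. But the proof of Theorem~\ref{thm:transfer} shows that $M$ is not tilting, because otherwise $A\in\thick(P^\bullet)$. Hence $M$ is not Wakamatsu tilting. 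Since $M$ is self-orthogonal with $|M|=n(B)$, this is exactly a counterexample to \cite[Conjecture~5.9]{Enomoto2023}, once we pass to right $B^{\mathrm{op}}$-modules.

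For (2), I would use the structural result in \cite{Enomoto2023} tying Bongartz completions to Wakamatsu tilting. A Bongartz completion of a self-orthogonal $M$ is a Wakamatsu tilting module $T$ with $M\in\add T$ (plus a perpendicularity property). So if $M$ had one, $T$ would be Wakamatsu tilting and, by Lemma~\ref{lem:wakamatsu-finite-global-dimension}, tilting. Then $M$ would be a direct summand of a tilting module, contradicting Theorem~\ref{thm:Cn}. The main check is to match Enomoto's definition precisely and confirm that its completion is Wakamatsu tilting and contains $M$ as a summand. If the definition instead requires $\add T$ to be built from $M^{\perp}$ or ${}^{\perp}M$, I would use Enomoto's characterization of when such a completion exists.

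For (3), I would invoke Enomoto's theorem that, for a self-orthogonal $M$, the following are equivalent: $M$ has a Bongartz completion, $M^{\perp}$ has a finite cover, and $M^{\perp}$ is covariantly finite. The last condition may need finite global dimension or finite projective dimension of $M$, which holds here by Corollary~\ref{cor:projective-dimension}. Granting this equivalence, (2) gives that $M^{\perp}$ has no finite cover. Also, covariant finiteness of $M^{\perp}$ would give a finite cover: take a left $M^{\perp}$-approximation $B\to C$; since $M^{\perp}$ contains the injectives, this approximation is a monomorphism, and Auslander--Reiten theory then yields a generator.

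The main obstacle is the dictionary with Enomoto's right-module framework. I need to make sure his hypotheses apply verbatim to $B^{\mathrm{op}}$, and that the correct perpendicular category is the one appearing in his equivalences. Everything else reduces to the already established fact that $M$ is not tilting.
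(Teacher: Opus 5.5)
Your proposal is correct and follows the paper's proof. Part (1) is identical, and part (3) uses the same two inputs: Enomoto's theorem that a Bongartz completion exists if and only if $M^{\perp}$ has a finite cover, and the implication from covariant finiteness to a finite cover.

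The one genuine deviation is in (2).
\begin{itemize}
\item The paper applies \cite[Proposition~3.10]{Enomoto2023}: a Bongartz completion of a module of finite projective dimension is tilting. This uses only $\pd_B M<\infty$.
\item You instead argue that the completion is Wakamatsu tilting and then apply Lemma~\ref{lem:wakamatsu-finite-global-dimension}. This needs $\operatorname{gl.dim}B<\infty$, which holds here.
\item It also needs a fact you have to cite explicitly. Enomoto's completion is a projectively Wakamatsu tilting module $T$ with $M\in\add T$ (the paper notes it is characterized by $T^{\perp}=M^{\perp}$), so you must know that such modules are in particular Wakamatsu tilting.
\end{itemize}
The paper's citation sidesteps this extra step entirely.

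In (3), drop the remarks about the approximation being a monomorphism and about Auslander--Reiten theory; they do not justify anything. The correct one-line reason is this: let $B\to C$ be a left $M^{\perp}$-approximation and $X\in M^{\perp}$. Any epimorphism $B^{m}\to X$ factors through $C^{m}$, so $C^{m}\to X$ is surjective and $C$ is a finite cover. This is the content of \cite[Proposition~2.12(2)]{Enomoto2023}, which the paper cites. You only need this single implication, not a three-way equivalence that also includes covariant finiteness.
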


\begin{proof}
Since $B$ is quasi-hereditary, it has finite global dimension.  If $M$
were Wakamatsu tilting, then it would be tilting by
Lemma~\ref{lem:wakamatsu-finite-global-dimension}, contrary to
Theorem~\ref{thm:Cn}.  This proves \textnormal{(1)}.

Suppose that $M$ admits a Bongartz completion $T$.  Since
$\pd_B M<\infty$, \cite[Proposition~3.10]{Enomoto2023} implies that
$T$ is a tilting module. So $M$ is partial-tilting, which contradicts Theorem~\ref{thm:Cn}.  Thus \textnormal{(2)}
holds.

By \cite[Theorem~3.12]{Enomoto2023}, a self-orthogonal module admits a
Bongartz completion if and only if its Ext-perpendicular category has a
finite cover.  Hence \textnormal{(2)} implies that $M^{\perp}$ has no finite
cover.  Since $M^{\perp}$ is closed under extensions and direct summands,
\cite[Proposition~2.12(2)]{Enomoto2023} shows that covariant finiteness
would imply the existence of a finite cover.  Therefore $M^{\perp}$ is not
covariantly finite, proving \textnormal{(3)}.

The final assertion follows from Theorem~\ref{thm:Cn} and
\textnormal{(1)}.
\end{proof}

\begin{remark}
\label{rem:not-wakamatsu-tilting-conjecture}
Corollary~\ref{cor:enomoto-consequences} is not a counterexample to the
Wakamatsu tilting conjecture, which starts with a Wakamatsu tilting module of
finite projective dimension and predicts that it is tilting.  Here finite
global dimension makes that implication valid by
Lemma~\ref{lem:wakamatsu-finite-global-dimension}, and this is precisely what
forces $M$ not to be Wakamatsu tilting.  Likewise, statement~\textnormal{(2)}
concerns Enomoto's projectively Wakamatsu tilting Bongartz completion,
characterized by equality of Ext-perpendicular categories.  It should not be
rephrased as the nonexistence of an arbitrary Wakamatsu tilting module merely
containing $M$ as a direct summand.
\end{remark}

\begin{remark}
\label{rem:tau-rigidity}
The module $M$ in Theorem~\ref{thm:Cn} is not $\tau$-rigid.  Indeed,
suppose that $M$ were $\tau$-rigid.  Since $M$ is basic and
$|M|=n(B)$, it would then be a $\tau$-tilting $B$-module \cite{AdachiIyamaReiten2014}. Then Zhang's theorem
\cite[Theorem~1.3]{Zhang2022}, together with $\pd_B M<\infty$ and $\Ext_B^i(M,M)=0$ for every $i>0$, would imply that $M$ is a classical tilting module, contradicting Theorem~\ref{thm:Cn}.  Thus $M$
is neither $\tau$-rigid nor $\tau$-tilting.
\end{remark}

\section{From failure of Wakamatsu tilting to nonfaithfulness}
\label{sec:nonfaithful-full-rank}
Chen, Li, Zhang, and Zhao \cite[Section~5]{ChenLiZhangZhao2025}
formulated the \emph{Self-orthogonal Faithful Conjecture}: every
self-orthogonal module $T$ satisfying $|T|=n(A)$ is faithful. They observed
that, for each fixed algebra, Enomoto's Self-orthogonal Wakamatsu-tilting
Conjecture implies the Self-orthogonal Faithful Conjecture, since every
Wakamatsu tilting module is faithful.

We now use the results of the preceding section to construct counterexamples
to the Self-orthogonal Faithful Conjecture.  The first step is to extract,
from the failure of the Wakamatsu-tilting property, a suitable
nonmonomorphic approximation.  This will provide the data for the
one-point extension construction below.

\subsection{A nonmonomorphic approximation}

For a $B$-module $M$, recall the notation
\[
{}^{\perp}M=
\{X\in B\text{-}\mathrm{mod}\mid
\Ext_B^i(X,M)=0\text{ for every }i>0\}.
\]

\begin{lemma}
\label{lem:nonmonic-left-approximation}
Let $B$ be a finite-dimensional $k$-algebra and let $M$ be a self-orthogonal
$B$-module.  If $M$ is not Wakamatsu tilting, then there exist a module
$U\in{}^{\perp}M$ and a minimal left $\add(M)$-approximation
$g\colon U\longrightarrow M^0$, with $M^0\in\add(M)$, which is not a
monomorphism.
\end{lemma}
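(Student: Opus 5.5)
We argue by contraposition: assuming that for every $U\in{}^{\perp}M$ every minimal left $\add(M)$-approximation of $U$ is a monomorphism, we build the coresolution of $B$ required in the definition of Wakamatsu tilting, so that $B\in\mathcal X_M$.

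\emph{Existence of approximations and their cokernels.} Since $B$ is finite-dimensional and $\add(M)$ is functorially finite, every $U$ has a left $\add(M)$-approximation. Minimality is obtained by removing the summands on which the approximation is radical; this is the standard left-minimal version, unique up to isomorphism. Now let $U\in{}^{\perp}M$ and let $g\colon U\to M^0$ be a minimal left approximation. Assume $g$ is injective, and set $C=\operatorname{Coker}g$, so that we have an exact sequence $0\to U\to M^0\to C\to0$. We claim that $C\in{}^{\perp}M$. Apply $\Hom_B(-,M)$ to this sequence. Since $g$ is a left approximation, the map $\Hom_B(M^0,M)\to\Hom_B(U,M)$ is surjective, and hence $\Ext^1_B(C,M)=0$, because $\Ext^1_B(M^0,M)=0$. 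For $i\geq2$, we have $\Ext^i_B(C,M)\cong\Ext^{i-1}_B(U,M)=0$, since both $M^0$ and $U$ lie in ${}^{\perp}M$; here we use that $M$ is self-orthogonal, so $M^0\in{}^{\perp}M$. Thus $C\in{}^{\perp}M$.

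\emph{Iteration.} Start with $U_0=B$, which lies in ${}^{\perp}M$ because it is projective. By the hypothesis, its minimal left approximation $U_0\to M^0$ is a monomorphism, so the previous step gives $U_1=\operatorname{Coker}\in{}^{\perp}M$. Applying the same argument to $U_1$, and then inductively, we obtain an infinite exact sequence
\[
  0\longrightarrow B\longrightarrow M^0\longrightarrow M^1\longrightarrow\cdots
\]
with every $M^i\in\add(M)$ and every successive image $U_i\in{}^{\perp}M$. Hence $B\in\mathcal X_M$. Together with self-orthogonality, this means $M$ is Wakamatsu tilting in the sense of \cite[Definition~2.7]{Enomoto2023}, which is the desired contradiction.

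Taking the contrapositive, if $M$ is not Wakamatsu tilting then the iteration must break down at some stage. This yields some $U=U_j\in{}^{\perp}M$, namely $U_0=B$ or one of its successive cokernels, whose minimal left $\add(M)$-approximation is not a monomorphism.

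The only point requiring care is the Ext computation showing $C\in{}^{\perp}M$. In particular, the vanishing of $\Ext^1$ genuinely uses the approximation property and not merely the fact that $M^0\in\add(M)$. I do not expect a serious obstacle beyond this; the argument is essentially Wakamatsu's standard construction.
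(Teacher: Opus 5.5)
Your proposal is correct and is essentially the paper's own proof. Starting from $X_0=B$, you iterate minimal left $\add(M)$-approximations, use the approximation property to kill $\Ext^1_B(X_{i+1},M)$ and dimension shifting together with self-orthogonality for the higher Ext groups, and observe that if every step were monic the spliced sequence would show $B\in\mathcal X_M$; the contrapositive framing is only cosmetic.

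One side remark is wrong as stated, though it does not affect the argument. A left-minimal version is obtained by splitting off target summands onto which the map has zero component, after a suitable decomposition. It is not obtained by removing summands on which the map is merely radical: for example, $k\to k[x]/(x^2)$, $1\mapsto x$, is radical but already a minimal left $\add(k[x]/(x^2))$-approximation.
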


\begin{proof}
Set $X_0=B$.  Since $B$ is projective, $X_0\in{}^{\perp}M$.  Suppose that
$X_i\in{}^{\perp}M$ has been constructed.  Since $\add(M)$ is covariantly
finite in $B\text{-}\mathrm{mod}$, there is a left $\add(M)$-approximation
$X_i\to M^i$; after deleting redundant direct summands of $M^i$, we may take
it to be minimal.  Denote it by $g_i\colon X_i\longrightarrow M^i$. If $g_i$ is a monomorphism, set
$X_{i+1}=\operatorname{Coker}g_i$, so that
\begin{equation}
\label{eq:approximation-step}
  0\longrightarrow X_i\xrightarrow{g_i}M^i
  \longrightarrow X_{i+1}\longrightarrow0
\end{equation}
is exact.

We claim that $X_{i+1}\in{}^{\perp}M$. Applying $\Hom_B(-,M)$ to
\eqref{eq:approximation-step}, the approximation property makes the map \(\Hom_B(M^i,M)\longrightarrow\Hom_B(X_i,M)\) surjective.  Hence $\Ext_B^1(X_{i+1},M)=0$.  For $r\geq2$, the same long
exact sequence, together with
\[
  \Ext_B^{>0}(M^i,M)=0
  \qquad\text{and}\qquad
  \Ext_B^{>0}(X_i,M)=0,
\]
gives $\Ext_B^r(X_{i+1},M)=0$.  Thus
$X_{i+1}\in{}^{\perp}M$.

If every $g_i$ were a monomorphism, then splicing the sequences
\eqref{eq:approximation-step} would give an exact sequence
\[
  0\longrightarrow B\longrightarrow M^0\longrightarrow M^1
  \longrightarrow M^2\longrightarrow\cdots
\]
whose terms belong to $\add(M)$.  The image of $M^{i-1}\to M^i$ is
isomorphic to $X_i$, and hence belongs to ${}^{\perp}M$.  Together with the
self-orthogonality of $M$, this is precisely the assertion that $M$ is
Wakamatsu tilting.  Hence some $g_i$ is not a monomorphism.  Taking
$U=X_i$, $M^0=M^i$, and $g=g_i$ proves the lemma.
\end{proof}

\subsection{Extension calculus for one-point extensions}

The nonmonomorphic approximation furnished by
Lemma~\ref{lem:nonmonic-left-approximation} will be incorporated into a
one-point extension of $B$.  To carry out this construction and verify
self-orthogonality of the resulting module, we first record the required
extension calculus for one-point extensions.
Let $U$ be a left $B$-module and set
\begin{equation}
\label{eq:one-point extension-general}
  \Gamma=
  \begin{pmatrix}
    B&U\\
    0&k
  \end{pmatrix},
  \qquad
  e=
  \begin{pmatrix}
    1_B&0\\
    0&0
  \end{pmatrix},
  \qquad
  f=
  \begin{pmatrix}
    0&0\\
    0&1
  \end{pmatrix}.
\end{equation}
Here $U$ is regarded as a $B$--$k$-bimodule in the evident way.  A left
$\Gamma$-module is identified with a triple
\[
  (X,V,\varphi),
  \qquad
  X\in B\text{-}\mathrm{mod},\quad
  V\in k\text{-}\mathrm{mod},\quad
  \varphi\colon U\otimes_kV\longrightarrow X,
\]
with action
\[
  \begin{pmatrix}b&u\\0&c\end{pmatrix}(x,v)
  =
  \bigl(bx+\varphi(u\otimes v),cv\bigr).
\]
A morphism $(a,s)\colon (X,V,\varphi)\to(X',V',\varphi')$ consists of maps
$a\colon X\to X'$ and $s\colon V\to V'$ satisfying
\[
  a\varphi=\varphi'(1_U\otimes s).
\]
We write
\[
  \iota(X)=(X,0,0),
  \qquad
  S=(0,k,0).
\]
Then $\Gamma f=(U,k,1_U)$ is projective, and the inclusion in the first
component and the projection onto the second component give a canonical exact
sequence
\begin{equation}
\label{eq:new-simple-presentation}
  0\longrightarrow\iota(U)\longrightarrow\Gamma f
  \longrightarrow S\longrightarrow0.
\end{equation}

The following extension formulas are standard for one-point extensions.
We include a short proof to identify explicitly the connecting map used in the proof
of Theorem~\ref{thm:SWC-to-SFC-reduction}.

\begin{lemma}
\label{lem:one-point extension-ext-calculus}
With the notation above, for all $B$-modules $X,Y$ and all $r\geq0$ one has
\begin{align}
  \Ext_\Gamma^r(\iota X,\iota Y)
  &\cong \Ext_B^r(X,Y),
  \label{eq:iota-ext}\\
  \Ext_\Gamma^r(\iota X,S)
  &=0.
  \label{eq:iota-to-simple-ext}
\end{align}
Moreover,
\begin{align}
  \Ext_\Gamma^1(S,\iota Y)
  &\cong\Hom_B(U,Y),
  \label{eq:simple-ext-one}\\
  \Ext_\Gamma^r(S,\iota Y)
  &\cong\Ext_B^{r-1}(U,Y)
  \qquad (r\geq2),
  \label{eq:simple-ext-higher}\\
  \Ext_\Gamma^r(S,S)&=0
  \qquad (r>0).
  \label{eq:simple-self-ext}
\end{align}
The isomorphism in \eqref{eq:simple-ext-one} may be chosen so that a map
$h\colon U\to Y$ corresponds to the extension
\begin{equation}
\label{eq:extension-associated-to-h}
  0\longrightarrow\iota(Y)\longrightarrow(Y,k,h)
  \longrightarrow S\longrightarrow0.
\end{equation}
Consequently, if $g:U\to M^0$ is a $B$-linear map and
\begin{equation}
\label{eq:extension-associated-to-g}
  0\longrightarrow\iota(M^0)\longrightarrow C_g
  \longrightarrow S\longrightarrow0,
  \qquad
  C_g=(M^0,k,g),
\end{equation}
then the connecting map obtained by applying $\Hom_\Gamma(-,\iota Y)$ to
\eqref{eq:extension-associated-to-g} is, under
\eqref{eq:simple-ext-one}, the map
\begin{equation}
\label{eq:connecting-map-composition}
  \Hom_B(M^0,Y)\longrightarrow\Hom_B(U,Y),
  \qquad
  a\longmapsto ag.
\end{equation}
\end{lemma}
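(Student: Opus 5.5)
We work with the triangular-matrix description of $\Gamma$. The idempotent $e$ satisfies $e\Gamma e\cong B$ and $\Gamma e\cong\iota(B)$; in the triple description $\Gamma e=(B,0,0)$ is projective. The functor $\iota$ is exact and fully faithful, and it sends projective $B$-modules to projective $\Gamma$-modules, since $\iota(Be_i)=\Gamma e_i$ is a summand of $\Gamma e$.

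For \eqref{eq:iota-ext}, we take a projective resolution $P_\bullet\to X$ over $B$. Applying $\iota$ gives a projective resolution $\iota P_\bullet\to\iota X$ over $\Gamma$. Full faithfulness identifies $\Hom_\Gamma(\iota P_\bullet,\iota Y)$ with $\Hom_B(P_\bullet,Y)$, and this yields \eqref{eq:iota-ext}. For \eqref{eq:iota-to-simple-ext}, a morphism $\iota P\to S=(0,k,0)$ has zero $V$-component, because $\iota P$ has $V=0$. Hence the complex $\Hom_\Gamma(\iota P_\bullet,S)$ vanishes identically.

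For the formulas involving $S$, we use \eqref{eq:new-simple-presentation}. Choose a projective resolution $P_\bullet\to U$ over $B$. Then $\iota P_\bullet$ followed by $\Gamma f$ is a projective resolution of $S$. Applying $\Hom_\Gamma(-,\iota Y)$ and noting that $\Hom_\Gamma(\Gamma f,\iota Y)=f\,\iota Y=0$, we obtain a long exact sequence in which $\Ext^r_\Gamma(\Gamma f,-)=0$ for $r\ge 1$. This gives $\Ext^r_\Gamma(S,\iota Y)\cong\Ext^{r-1}_\Gamma(\iota U,\iota Y)\cong\Ext^{r-1}_B(U,Y)$ for $r\ge2$. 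In degree one it gives the exact sequence $\Hom_\Gamma(\Gamma f,\iota Y)=0\to\Hom_\Gamma(\iota U,\iota Y)\to\Ext^1_\Gamma(S,\iota Y)\to0$, and this proves \eqref{eq:simple-ext-one}.

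For \eqref{eq:simple-self-ext}, we apply $\Hom_\Gamma(-,S)$ to \eqref{eq:new-simple-presentation}. Using \eqref{eq:iota-to-simple-ext} for $\iota U$ and the projectivity of $\Gamma f$, we get $\Ext^r_\Gamma(S,S)=0$ for $r\ge2$. For $r=1$, the map $\Hom(\Gamma f,S)=k\to\Hom(S,S)=k$ is an isomorphism, and $\Hom(\iota U,S)=0$, so $\Ext^1_\Gamma(S,S)=0$.

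The main point requiring care is identifying the isomorphism \eqref{eq:simple-ext-one} explicitly as $h\mapsto$ \eqref{eq:extension-associated-to-h}. The connecting map $\delta\colon\Hom(\iota U,\iota Y)\to\Ext^1(S,\iota Y)$ sends $h$ to the pushout of \eqref{eq:new-simple-presentation} along $\iota h$. We compute this pushout directly. It is $(Y\oplus U,k,(0,1_U))$ modulo the antidiagonal copy of $U$, which is $(Y,k,h)$, and the induced maps $\iota Y\to(Y,k,h)\to S$ are the evident inclusion and projection. This is exactly \eqref{eq:extension-associated-to-h}.

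For the final assertion, let $a\colon M^0\to Y$. The connecting map sends $a$ to the pushout of \eqref{eq:extension-associated-to-g} along $\iota a$, which is $(Y,k,ag)$ by the same computation. Under \eqref{eq:simple-ext-one} this corresponds to $ag$, giving \eqref{eq:connecting-map-composition}. Sign conventions for connecting maps may introduce a global sign. This is harmless, since it can be absorbed into the choice of the isomorphism \eqref{eq:simple-ext-one}.
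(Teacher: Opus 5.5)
Your proof is correct and follows essentially the same route as the paper: you transport projective resolutions along $\iota$, apply $\Hom_\Gamma(-,\iota Y)$ and $\Hom_\Gamma(-,S)$ to $0\to\iota(U)\to\Gamma f\to S\to 0$, and identify the connecting map with the pushout along $\iota(h)$. The differences are cosmetic: for the final assertion you recompute the pushout along $\iota(a)$ directly instead of invoking naturality of the connecting homomorphism, and in the $\Ext^1_\Gamma(S,S)$ step the map actually runs $\Hom_\Gamma(S,S)\to\Hom_\Gamma(\Gamma f,S)$, which is immaterial since $\Hom_\Gamma(\iota U,S)=0$.
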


\begin{proof}
The functor $\iota$ is exact and sends projective $B$-modules to
projective $\Gamma$-modules; indeed, \(\iota(Be')\cong \Gamma e'\) for every idempotent $e'$ of $B$.  Hence a projective $B$-resolution of
$X$ is sent to a projective $\Gamma$-resolution of $\iota X$, which gives
\eqref{eq:iota-ext}.  Since \(\Hom_\Gamma(\iota P,S)=0\) for every projective $B$-module $P$, the same resolution gives
\eqref{eq:iota-to-simple-ext}.

Apply $\Hom_\Gamma(-,\iota Y)$ to
\eqref{eq:new-simple-presentation}.  Since $\Gamma f$ is projective and \(\Hom_\Gamma(\Gamma f,\iota Y)=0\), the connecting homomorphism yields a natural isomorphism
\[
  \delta_Y:
  \Hom_B(U,Y)
  \xrightarrow{\sim}
  \Ext_\Gamma^1(S,\iota Y),
\]
and dimension shifting gives
\eqref{eq:simple-ext-higher}.  Applying $\Hom_\Gamma(-,S)$ instead, and
using \eqref{eq:iota-to-simple-ext}, gives
\eqref{eq:simple-self-ext}.

Under $\delta_Y$, a map $h:U\to Y$ is represented by the pushout of
\eqref{eq:new-simple-presentation} along $\iota(h)$, which is precisely
\eqref{eq:extension-associated-to-h}.  In particular,
\eqref{eq:extension-associated-to-g} represents $\delta_{M^0}(g)$.
Naturality of the connecting homomorphism with respect to a map
$a\colon M^0\to Y$ therefore identifies the connecting map associated with
\eqref{eq:extension-associated-to-g} with
\eqref{eq:connecting-map-composition}.
\end{proof}

\subsection{The reduction theorem and its consequences}

We can now combine the two preceding lemmas.  Starting with a full-rank
self-orthogonal module which is not Wakamatsu tilting, we use
Lemma~\ref{lem:nonmonic-left-approximation} to obtain a nonmonomorphic
approximation $g$, and encode $g$ in a module over the corresponding
one-point extension.  The extension formulas above then allow us to
preserve self-orthogonality while the nontrivial kernel of $g$ forces the
resulting module to be nonfaithful.

\begin{theorem}
\label{thm:SWC-to-SFC-reduction}
Let $B$ be a finite-dimensional $k$-algebra and let $M$ be a basic
self-orthogonal $B$-module such that \(|M|=n(B)\).
If $M$ is not Wakamatsu tilting, then there exist a finite-dimensional
$k$-algebra $\Gamma$ and a basic self-orthogonal $\Gamma$-module $T$ such
that
\[
  |T|=n(\Gamma)
  \qquad\text{and}\qquad
  \operatorname{Ann}_\Gamma(T)\neq0.
\]
More precisely, one may take $\Gamma$ to be a one-point extension of $B$.
Each of the properties of being basic, connected, and quasi-hereditary is
inherited by $\Gamma$ whenever it is possessed by $B$.
\end{theorem}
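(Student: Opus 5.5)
Apply Lemma~\ref{lem:nonmonic-left-approximation} to obtain $U\in{}^{\perp}M$ and a minimal left $\add(M)$-approximation $g\colon U\to M^0$ that is not a monomorphism. Form the one-point extension $\Gamma$ of \eqref{eq:one-point extension-general} with this $U$, so $n(\Gamma)=n(B)+1$. The candidate module is
\[
  T=\iota(M)\oplus C_g,\qquad C_g=(M^0,k,g),
\]
or, if $C_g$ decomposes, its basic version. Since $(M^0,k,g)$ has one-dimensional $V$-part, exactly one indecomposable summand has nonzero $V$-part; all others have the form $\iota(Z)$ with $Z\in\add(M)$. I will use minimality of $g$ to show that $C_g$ is indecomposable, or at least to replace $T$ by $\iota(M)\oplus C'$, where $C'$ is that distinguished summand. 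Then $|T|=|M|+1=n(\Gamma)$, since $C'\not\cong\iota(Z)$. Nonfaithfulness comes from $\ker g\neq0$. Pick $0\neq u\in\ker g$ and set $\xi=\begin{pmatrix}0&u\\0&0\end{pmatrix}\in\Gamma$. This element kills every $\iota X$, because it acts only through $\varphi$, which is zero. It also kills $C_g$, because $\xi(x,v)=(g(u\otimes v),0)=0$. So $\operatorname{Ann}_\Gamma(T)\neq0$.

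For self-orthogonality, \eqref{eq:iota-ext} gives $\Ext^r_\Gamma(\iota M,\iota M)=\Ext^r_B(M,M)=0$. Combining \eqref{eq:iota-to-simple-ext} with the sequence \eqref{eq:extension-associated-to-g} gives $\Ext^r_\Gamma(\iota M,C_g)\cong\Ext^r_\Gamma(\iota M,\iota M^0)=0$. For maps out of $C_g$, I apply $\Hom_\Gamma(-,\iota Y)$ with $Y\in\add(M)$ to \eqref{eq:extension-associated-to-g}.
\begin{itemize}
  \item By \eqref{eq:connecting-map-composition}, the connecting map $\Hom_B(M^0,Y)\to\Ext^1_\Gamma(S,\iota Y)\cong\Hom_B(U,Y)$ is $a\mapsto ag$. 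It is surjective because $g$ is a left approximation, and $\Ext^1_\Gamma(\iota M^0,\iota Y)=0$, so $\Ext^1_\Gamma(C_g,\iota Y)=0$.
  \item For $r\geq2$, \eqref{eq:simple-ext-higher} gives $\Ext^r_\Gamma(S,\iota Y)\cong\Ext^{r-1}_B(U,Y)=0$ because $U\in{}^{\perp}M$. Together with $\Ext^r(\iota M^0,\iota Y)=0$ this yields $\Ext^r_\Gamma(C_g,\iota Y)=0$.
\end{itemize}
It remains to show $\Ext^r_\Gamma(C_g,C_g)=0$. Applying $\Hom(C_g,-)$ to \eqref{eq:extension-associated-to-g} reduces this to the previous step together with $\Ext^r_\Gamma(C_g,S)$. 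That group is handled by applying $\Hom(-,S)$ to \eqref{eq:extension-associated-to-g} and using \eqref{eq:iota-to-simple-ext} and \eqref{eq:simple-self-ext}, which give $\Ext^r(C_g,S)=0$ for $r\geq1$. Finite projective dimension, which the application needs, follows from $\pd S\leq\pd U+1$. In the situation of Theorem~\ref{thm:Cn} it is automatic, since $B$ and $\Gamma$ have finite global dimension.

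For the inherited properties: $\Gamma$ is basic, since its new vertex is one-dimensional and the old ones are those of $B$. It is connected if $B$ is and $U\neq0$; here $U\neq0$ because a non-monomorphism has nonzero domain. It is quasi-hereditary because $\Gamma f$ is projective with top $S$, and one can take the new vertex as the maximal element: then $\Gamma f\Gamma$ is a heredity ideal with $\Gamma/\Gamma f\Gamma\cong B$. This is the standard fact that a one-point extension of a quasi-hereditary algebra is quasi-hereditary. The step I expect to be the main obstacle is the rank count, that is, showing that $C_g$ contributes exactly one new indecomposable summand not already in $\add\iota(M)$. If $C_g\cong C'\oplus\iota(Z)$, then $g$ would split off a summand $U\to 0$ onto $Z$, contradicting left minimality of $g$. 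I would write out this splitting argument carefully using the morphism description $a\varphi=\varphi'(1\otimes s)$.
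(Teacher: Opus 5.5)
Your proposal is correct and follows essentially the same route as the paper: the same one-point extension by $U$, the same module $\iota(M)\oplus C_g$, the same long-exact-sequence verification of self-orthogonality via the connecting map $a\mapsto ag$, the same annihilating element built from $0\neq u\in\ker g$, and the same heredity-ideal argument with $\Gamma f$. The rank count you flag as the main obstacle is settled in the paper exactly by your left-minimality idea: an idempotent endomorphism $(a,\lambda)$ of $C_g$ satisfies $ag=\lambda g$, so minimality forces $a\in\{0,1\}$ and $C_g$ is indecomposable. Your fallback of keeping only the summand with nonzero $k$-part would also suffice without proving indecomposability.
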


\begin{proof}
By Lemma~\ref{lem:nonmonic-left-approximation}, choose a nonmonomorphic
minimal left $\add(M)$-approximation
\[
  g:U\longrightarrow M^0,
  \qquad
  U\in{}^{\perp}M.
\]
Let $\Gamma$ be the one-point extension
\eqref{eq:one-point extension-general}, put
\[
  C_g=(M^0,k,g),
  \qquad
  T=\iota(M)\oplus C_g,
\]
and retain the exact sequence \eqref{eq:extension-associated-to-g}.

We first prove self-orthogonality.  By \eqref{eq:iota-ext},
\[
  \Ext_\Gamma^r(\iota M,\iota M)=0
  \qquad (r>0).
\]
Since $M^0\in\add(M)$, applying $\Hom_\Gamma(\iota M,-)$ to
\eqref{eq:extension-associated-to-g} and using
\eqref{eq:iota-to-simple-ext} gives
\begin{equation}
\label{eq:iota-M-to-Cg}
  \Ext_\Gamma^r(\iota M,C_g)=0
  \qquad (r>0).
\end{equation}

Let $Y\in\add(M)$.  Apply $\Hom_\Gamma(-,\iota Y)$ to
\eqref{eq:extension-associated-to-g}.  In degree one, the relevant part of
the long exact sequence is
\[
  \Hom_B(M^0,Y)\longrightarrow\Hom_B(U,Y)
  \longrightarrow\Ext_\Gamma^1(C_g,\iota Y)
  \longrightarrow\Ext_B^1(M^0,Y).
\]
By Lemma~\ref{lem:one-point extension-ext-calculus}, the first map is
$a\mapsto ag$; it is surjective because $g$ is a left $\add(M)$-approximation.
The last term is zero by self-orthogonality.  Hence
$\Ext_\Gamma^1(C_g,\iota Y)=0$.  For $r\geq2$, the same long exact sequence,
\eqref{eq:simple-ext-higher}, and $U\in{}^{\perp}M$ give
\[
  \Ext_\Gamma^r(C_g,\iota Y)
  \cong
  \Ext_\Gamma^r(S,\iota Y)
  \cong
  \Ext_B^{r-1}(U,Y)
  =0.
\]
Thus
\begin{equation}
\label{eq:Cg-to-add-M}
  \Ext_\Gamma^r(C_g,\iota Y)=0
  \qquad (Y\in\add(M),\ r>0).
\end{equation}
In particular, this holds for $Y=M$ and $Y=M^0$.

Applying $\Hom_\Gamma(-,S)$ to
\eqref{eq:extension-associated-to-g}, and using
\eqref{eq:iota-to-simple-ext} and \eqref{eq:simple-self-ext}, yields
\begin{equation}
\label{eq:Cg-to-S}
  \Ext_\Gamma^r(C_g,S)=0
  \qquad (r>0).
\end{equation}
Finally, applying $\Hom_\Gamma(C_g,-)$ to
\eqref{eq:extension-associated-to-g} and using
\eqref{eq:Cg-to-add-M} for $Y=M^0$ together with
\eqref{eq:Cg-to-S}, we obtain
\[
  \Ext_\Gamma^r(C_g,C_g)=0
  \qquad (r>0).
\]
Together with \eqref{eq:iota-M-to-Cg} and
\eqref{eq:Cg-to-add-M} for $Y=M$, this proves that $T$ is self-orthogonal.

We next verify the rank and indecomposability assertions.  The radical of $\Gamma$ is
\[
  \rad\Gamma
  =
  \begin{pmatrix}
    \rad B&U\\
    0&0
  \end{pmatrix}.
\]
Indeed, the displayed ideal is nilpotent and its quotient is the semisimple
algebra
\[
  (B/\rad B)\times k.
\]
Consequently, $n(\Gamma)=n(B)+1$, and $\Gamma$ is basic whenever $B$ is
basic.  The functor $\iota$ preserves indecomposability and isomorphism
classes.

We claim that $C_g$ is indecomposable.  An endomorphism of $C_g$ is a pair
$(a,\lambda)$ with
\[
  a\in\operatorname{End}_B(M^0),
  \qquad
  \lambda\in k,
  \qquad
  ag=\lambda g.
\]
If $(a,\lambda)$ is idempotent, then $\lambda\in\{0,1\}$.  If $\lambda=1$,
then $ag=g$, so the left minimality of $g$ implies that $a$ is invertible;
since $a$ is idempotent, $a=1$.  If $\lambda=0$, then
$(1-a)g=g$, and the same argument gives $1-a=1$, hence $a=0$.  Thus
$C_g$ has no nontrivial idempotent endomorphism and is indecomposable.
Moreover, $fC_g\cong k$, whereas $f\iota(X)=0$ for every $B$-module $X$.
It follows that $C_g$ is not isomorphic to a summand of $\iota(M)$.  Hence
$T$ is basic and
\[
  |T|=|M|+1=n(B)+1=n(\Gamma).
\]

Since $g$ is not a monomorphism, choose $0\neq u\in\ker g$ and set
\[
  z_u=
  \begin{pmatrix}
    0&u\\
    0&0
  \end{pmatrix}
  \in\Gamma.
\]
The element $z_u$ annihilates $\iota(M)$.  Its action on
$C_g=M^0\oplus k$ is
\[
  z_u(x,\lambda)=(\lambda g(u),0)=0.
\]
Thus $0\neq z_u\in\operatorname{Ann}_\Gamma(T)$, proving that $T$ is not
faithful.

It remains to verify the permanence assertions.  Assume first that $B$ is
connected.  Then $U\neq0$, because $g$ has nonzero kernel.  Let
\[
  z=
  \begin{pmatrix}
    b&x\\
    0&c
  \end{pmatrix}
\]
be a central idempotent of $\Gamma$.  Commuting with $f$ gives $x=0$, and
commuting with the copy of $B$ in $e\Gamma e$ shows that $b$ is a central
idempotent of $B$.  Thus $b=0$ or $b=1_B$, while $c=0$ or $c=1$ because
$c^2=c$.  Commuting with every element
$\left(\begin{smallmatrix}0&u\\0&0\end{smallmatrix}\right)$ gives
$bu=uc$ for all $u\in U$.  Since $U\neq0$, this forces $b=c1_B$, so
$z=0$ or $z=1_\Gamma$.  Hence $\Gamma$ is connected.

Finally, assume that $B$ is quasi-hereditary.  Since $f\Gamma=kf$, the ideal
\[
  J=\Gamma f\Gamma=\Gamma f
\]
is a projective left $\Gamma$-module.  Moreover,
\[
  J^2=J,
  \qquad
  f\rad\Gamma=0,
  \qquad
  J(\rad\Gamma)J=0,
  \qquad
  \Gamma/J\cong B.
\]
Thus $J$ is a heredity ideal.  To make the last step explicit, let
\[
  0=I_0\subset I_1\subset\cdots\subset I_m=B
\]
be a heredity chain for $B$, and let $\pi\colon \Gamma\to\Gamma/J\cong B$ be the
quotient map.  Then
\[
  0\subset J\subset\pi^{-1}(I_1)\subset\cdots
  \subset\pi^{-1}(I_m)=\Gamma
\]
is a heredity chain for $\Gamma$: the first factor is the heredity ideal
$J$, and every subsequent heredity factor is identified, through $\pi$, with
the corresponding heredity factor $I_i/I_{i-1}$ of $B$.  Therefore
$\Gamma$ is quasi-hereditary.
\end{proof}

\begin{remark}
Recently, Liu, Feng, and Gao \cite{LiuFengGao2026} studied projectively
Wakamatsu tilting modules over one-point extensions, with emphasis on
lifting such modules and their mutations. Our use of one-point extensions
is complementary. We prove a converse reduction after allowing the algebra
to change: every counterexample to the Self-orthogonal Wakamatsu-tilting
Conjecture produces, by a one-point extension, a counterexample to the
Self-orthogonal Faithful Conjecture. 
\end{remark}

We now apply Theorem~\ref{thm:SWC-to-SFC-reduction} to the full-rank
counterexample obtained in Sections~4 and~5. This gives the announced
counterexample to the Self-orthogonal Faithful Conjecture.

\begin{proof}[Proof of Theorem~\ref{thm:SFC-counterexample}]
Let $B$ and $M$ be as in Theorem~\ref{thm:Cn}.  By
Corollary~\ref{cor:enomoto-consequences}, the module $M$ is not Wakamatsu
tilting.  Apply Theorem~\ref{thm:SWC-to-SFC-reduction}.  Since $B$ is basic,
connected, and quasi-hereditary over $\C$, we obtain a basic connected
quasi-hereditary $\C$-algebra $\Gamma$ and a basic self-orthogonal
$\Gamma$-module $T$ such that
\[
  |T|=n(\Gamma)
  \qquad\text{and}\qquad
  \operatorname{Ann}_\Gamma(T)\neq0.
\]
The quasi-heredity of $\Gamma$ implies $\operatorname{gl.dim}\Gamma<\infty$,
so $\pd_\Gamma T<\infty$.  Thus $T$ is pretilting.

A tilting module is faithful: indeed, the defining monomorphism
$\Gamma\to T_0$ with $T_0\in\add(T)$ forces
$\operatorname{Ann}_\Gamma(T)=0$.  Therefore $T$ is not tilting.  Suppose,
more generally, that $T$ were a direct summand of a tilting module $T'$.
After replacing $T'$ by its basic representative, every indecomposable
summand of $T$ still occurs in $T'$.  Since both $T$ and a basic tilting
$\Gamma$-module have exactly $n(\Gamma)$ pairwise non-isomorphic
indecomposable summands, one obtains $\add(T')=\add(T)$.  The tilting
coresolution for $T'$ would then be a tilting coresolution by objects in
$\add(T)$, forcing $T$ itself to be tilting, a contradiction.  Hence $T$ has
no tilting completion.  The asserted counterexample to the Self-orthogonal
Faithful Conjecture follows.
\end{proof}

The preceding reduction has a consequence beyond the particular
counterexample above. Together with the implication observed by
Chen, Li, Zhang, and Zhao, it shows that the two self-orthogonality conjectures are
in fact equivalent when formulated over the class of all
finite-dimensional algebras.

\begin{corollary}
\label{cor:SWC-SFC-equivalent-universally}
As universal assertions over all finite-dimensional algebras, the
Self-orthogonal Wakamatsu-tilting Conjecture and the Self-orthogonal Faithful
Conjecture are equivalent.
\end{corollary}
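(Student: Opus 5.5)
We prove the two implications separately, with each conjecture read as a universal statement over all finite-dimensional algebras. Here SWC is the assertion that every self-orthogonal $A$-module $T$ with $|T|=n(A)$ is Wakamatsu tilting, and SFC is the assertion that every such $T$ is faithful.

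\emph{SWC implies SFC.} This direction already holds algebra by algebra, as observed by Chen, Li, Zhang, and Zhao. Suppose $T$ is Wakamatsu tilting. Then the coresolution $0\to A\to T^0\to\cdots$ begins with a monomorphism $A\to T^0$ with $T^0\in\add(T)$. Hence $\operatorname{Ann}_A(T)\subseteq\operatorname{Ann}_A(T^0)=0$, so $T$ is faithful. Thus if SWC holds for every algebra, every full-rank self-orthogonal module is faithful, and SFC holds for every algebra.

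\emph{SFC implies SWC.} We argue by contraposition. Suppose SWC fails for some finite-dimensional algebra $B$. Then there is a self-orthogonal $B$-module $M$ with $|M|=n(B)$ that is not Wakamatsu tilting. We first replace $M$ by its basic representative $M^{\mathrm b}$. This is harmless, for three reasons:
\begin{itemize}
\item $\add(M^{\mathrm b})=\add(M)$, so $M^{\mathrm b}$ is still self-orthogonal and $|M^{\mathrm b}|=|M|=n(B)$;
\item the class $\mathcal X_M$ depends only on $\add(M)$, so $M^{\mathrm b}$ is still not Wakamatsu tilting;
\item ${}^{\perp}M={}^{\perp}M^{\mathrm b}$.
\end{itemize}
Next we apply Theorem~\ref{thm:SWC-to-SFC-reduction} to $M^{\mathrm b}$. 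It produces a one-point extension $\Gamma$ of $B$ and a basic self-orthogonal $\Gamma$-module $T$ with $|T|=n(\Gamma)$ and $\operatorname{Ann}_\Gamma(T)\neq0$. This $T$ violates SFC for $\Gamma$, so SFC fails universally.

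The step needing the most care is checking that the hypotheses of Theorem~\ref{thm:SWC-to-SFC-reduction} match the conjectures as formulated. That theorem asks for a basic $M$, which the reduction above supplies. The conjecture statements may also involve Artin algebras or right modules. Right modules are handled by passing to opposite algebras, as in Section~\ref{sec:wakamatsu-consequences}. Since the corollary is stated for finite-dimensional algebras over a field, the one-point extension $\Gamma$ remains in the same class.
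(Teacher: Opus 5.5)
Your proposal is correct and follows essentially the same route as the paper. For the forward direction, Wakamatsu tilting modules are faithful; you supply the one-line annihilator argument via the monomorphism $A\to T^0$, where the paper simply cites Chen, Li, Zhang, and Zhao. The converse goes by contraposition: pass to the basic representative, note that all relevant properties depend only on the additive closure, and apply Theorem~\ref{thm:SWC-to-SFC-reduction}.
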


\begin{proof}
The former conjecture implies the latter because every Wakamatsu tilting
module is faithful; see \cite[Section~5]{ChenLiZhangZhao2025}.  Conversely,
if the former conjecture fails for some algebra $B$, replace the offending
module by its basic representative.  Self-orthogonality, the number of
isomorphism classes of indecomposable summands, and the Wakamatsu-tilting
property depend only on the additive closure of the module, so this remains
a counterexample.  Theorem~\ref{thm:SWC-to-SFC-reduction} then produces a
one-point extension $\Gamma$ for which the latter conjecture fails.  This
proves the converse by contraposition.
\end{proof}

\begin{remark}
Corollary~\ref{cor:SWC-SFC-equivalent-universally} is a statement about the
two conjectures over the class of all finite-dimensional algebras.  It does
not assert that the two properties are equivalent for a fixed algebra: the
reverse reduction replaces $B$ by the one-point extension $\Gamma$.
\end{remark}

\section{The almost-full-rank counterexample}

In this section we pass from the full-rank counterexample of
Theorem~\ref{thm:Cn} to the adjacent almost-full-rank case.  The idea is to
embed the algebra $B$ into a regular one-point extension $\Lambda$ and view
the module $M$ as a $\Lambda$-module.  This increases the rank of the algebra
by one while leaving the number of indecomposable summands of the module
unchanged.  The main point is then to show that any tilting completion over
$\Lambda$ would restrict to a tilting completion over $B$, contradicting
Theorem~\ref{thm:Cn}.  We begin with the standard generation criterion for
generalized tilting modules that will be used in this restriction argument.

\begin{lemma}
\label{lem:tilting-generation}
Let $A$ be a finite-dimensional algebra and let $T$ be a pretilting
$A$-module. Then $T$ is tilting if and only if
\begin{equation}
\label{eq:tilting-generation}
  A\in\thick_{\Db(A\text{-}\mathrm{mod})}(T).
\end{equation}
\end{lemma}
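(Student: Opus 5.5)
Necessity is the easy half. If $T$ is tilting, split the coresolution $0\to A\to T_0\to\cdots\to T_s\to0$ of (T3) into short exact sequences $0\to Z_i\to T_i\to Z_{i+1}\to0$ with $Z_0=A$ and $Z_s\cong T_s$. Each of these gives a triangle in $\Db(A\text{-}\mathrm{mod})$. Descending induction on $i$ then shows $Z_i\in\thick(T)$, and in particular $A\in\thick(T)$. This is the argument already used in the proof of Theorem~\ref{thm:transfer}.

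For sufficiency, assume $T$ is pretilting and $A\in\thick(T)$. I will argue through derived equivalence and avoid constructing the coresolution by hand.

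\textbf{Step 1: $T$ is a tilting complex.} Since $\pd_A T<\infty$, the module $T$ is perfect, so $T\in\Kb(\proj A)$ via the fully faithful embedding $\Kb(\proj A)\hookrightarrow\Db(A\text{-}\mathrm{mod})$. Self-orthogonality gives $\Hom_{\Db}(T,T[i])=\Ext^i_A(T,T)=0$ for $i\neq0$; for $i<0$ this is automatic because $T$ is a module. Finally, $\thick(T)\supseteq\thick(A)=\Kb(\proj A)$. So $T$ is a tilting complex in Rickard's sense.

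\textbf{Step 2: the derived equivalence.} Put $E=\operatorname{End}_A(T)^{\mathrm{op}}$. Rickard's theorem gives a triangle equivalence $F\colon\Db(A\text{-}\mathrm{mod})\to\Db(E\text{-}\mathrm{mod})$ with $F(T)\cong E$.

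\textbf{Step 3: producing the coresolution.} Set $s=\pd_A T$. Then $\Hom_{\Db}(T,A[i])=\Ext^i_A(T,A)=0$ for $i>s$ and for $i<0$, so $F(A)$ is a complex with cohomology concentrated in degrees $[0,s]$. Moreover $F(A)$ is perfect over $E$, because $A\in\thick(T)$ and $F(T)=E$. Hence $F(A)$ is isomorphic to a bounded complex of finitely generated projective $E$-modules in degrees $[0,s']$ for some $s'\ge s$. In degree $0$ it has cohomology only as the kernel of the differential, since there is no cohomology below degree $0$.

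The plan is to use negative-degree vanishing to truncate this complex to degrees $[0,s]$. The top term then becomes $\operatorname{Coker}$ of the preceding differential. Its projectivity would follow from $\Hom(F(A),Y[i])=\Ext$-vanishing for $i>s$: indeed $\Hom(A,F^{-1}Y[i])=H^i(F^{-1}Y)$, and for $Y$ an $E$-module the complex $F^{-1}Y$ has bounded cohomology, but not necessarily in degrees $\le s$. This is the delicate point.

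\textbf{Expected obstacle and fallback.} Because of that delicate point, I would first try the following more direct route instead. Realize the object $A\in\thick(T)$ via the standard filtration argument for thick subcategories generated by a self-orthogonal module of finite projective dimension (Miyashita's/Happel's characterization). Namely, show that the subcategory
\[
\mathcal X=\{X\in A\text{-}\mathrm{mod}\cap T^{\perp}\mid X \text{ has a finite } \add(T)\text{-coresolution with terms in } T^{\perp}\}
\]
contains every module lying in $T^{\perp}\cap\thick(T)$. Here $A\in T^{\perp}$ holds only for $i>s$, so the precise claim I would prove is the following.

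\begin{quote}
For a pretilting module $T$, the modules in $\thick(T)\cap A\text{-}\mathrm{mod}$ having a finite $\add(T)$-coresolution are closed under the triangles building $\thick(T)$.
\end{quote}

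I would prove this by mapping cones, applying $\Hom(T,-)$ and using $\Ext^{>0}(T,T)=0$.

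Of the two routes, the cleanest I expect to work is to cite Miyashita/Happel's equivalent characterization of tilting modules, namely (T3) $\Leftrightarrow$ $\thick(T)\ni A$ for pretilting $T$ (e.g.\ \cite{Miyashita1986}). I would present the derived-equivalence argument as a justification, with the truncation step as the main technical point to be checked carefully.
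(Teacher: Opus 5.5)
Your necessity argument and your Step~1 are exactly the paper's argument. The paper then simply invokes the standard characterization that a pretilting module whose projective resolution is a tilting complex is a tilting module; it cites Wei's characterization. If you also just cite that result, as in your final fallback, your proof is the paper's proof. Check that the reference you give actually contains this converse direction.

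The derived-equivalence ``justification'' you plan to include has a genuine gap as written. The trouble sits at the opposite end from the one you flagged.

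\textbf{The unjustified step.} In Step~3 you claim that $F(A)$ has a projective representative in degrees $[0,s']$. This does not follow from perfectness together with cohomology in $[0,s]$. A module of projective dimension $p>0$ placed in degree $0$ is perfect with cohomology only in degree $0$, yet its minimal projective representative occupies $[-p,0]$. This lower bound is precisely the content of (T3).

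\textbf{The end you flagged is harmless.} In a minimal representative $P$, all differentials land in the radical, so the top differential is never surjective. Hence $H^{>s}=0$ already forces $P^i=0$ for $i>s$.

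\textbf{How to close the gap.} The quantity you isolate, $H^i(F^{-1}Y)$, is the right one, but you need it to vanish for all $i>0$, not just $i>s$. This holds once $F$ is the standard equivalence $\mathbf R\Hom_A(T,-)$ with quasi-inverse $G=T\otimes^{\mathbf L}_E-$. This is an equivalence because $T$ is a compact generator whose derived endomorphism ring is $E$ in degree $0$. Since $G(Y)$ is computed from a projective resolution of $Y$, it has no cohomology in positive degrees.

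Concretely, suppose the lowest nonzero term of $P$ were $P^l$ with $l<0$. Pick a simple $E$-module $S$ with $\Hom_E(P^l,S)\neq0$.
\begin{itemize}
\item By minimality, $\Hom_{\Db(E\text{-}\mathrm{mod})}(F(A),S[-l])\cong\Hom_E(P^l,S)\neq0$.
\item Since $G$ is a quasi-inverse of $F$, the same group is isomorphic to $\Hom(A,G(S)[-l])=H^{-l}(G(S))=0$.
\end{itemize}
This contradiction shows $P$ lives in $[0,s]$. Then $G(P)=T\otimes_E P$ directly gives the coresolution $0\to A\to T\otimes_E P^0\to\cdots\to T\otimes_E P^s\to0$.

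\textbf{Drop the other fallback.} Objects of $\thick(T)$ are complexes, so ``closure of a class of modules under the triangles building $\thick(T)$'' is not a meaningful statement. Nothing guarantees that $A$ is reached from $T$ through triangles all of whose terms are modules.
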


\begin{proof}
If $T$ is tilting, its defining coresolution immediately implies
\eqref{eq:tilting-generation}.

Conversely, since $T$ has finite projective dimension, it is a perfect
complex. Moreover, self-orthogonality gives
$\Hom_{\Db(A\text{-}\mathrm{mod})}(T,T[i])=0$ for every $i>0$, while the
same vanishing holds for $i<0$ since $T$ is a module concentrated in degree
zero. Thus a projective resolution of $T$ is self-orthogonal in
$\Kb(\proj A)$.

Since $T$ is perfect,
$\thick_{\Db(A\text{-}\mathrm{mod})}(T)\subseteq\Kb(\proj A)$.
On the other hand, condition \eqref{eq:tilting-generation} implies that
$A\in\thick(T)$, and hence
$\Kb(\proj A)=\thick(A)\subseteq\thick(T)$.
Therefore $\thick(T)=\Kb(\proj A)$, so a projective resolution of $T$ is a tilting complex. By the standard
characterization of tilting modules in terms of their projective
resolutions, $T$ is a tilting module; see
\cite[Remark~3.3, Theorem~3.5, and Corollary~3.7]{Wei2013}.
\end{proof}

We next recall the restriction functor for a one-point extension.
Restriction and extension of classical tilting modules along one-point
extensions by projective modules were studied by Assem, Happel and Trepode
\cite{AssemHappelTrepode2007}. For the generalized tilting modules considered
in this paper, we will use the homological properties of the restriction
functor recorded in
\cite[Section~2]{AsadollahiPadashnikSadeghiTreffinger2024}.

\begin{lemma}
\label{lem:regular-one-point-restriction}
Let $B$ be a finite-dimensional $\C$-algebra and set
\[
  \Lambda=
  \begin{pmatrix}
    B & B\\
    0 & \C
  \end{pmatrix},
  \qquad
  e=
  \begin{pmatrix}
    1_B & 0\\
    0 & 0
  \end{pmatrix}.
\]
If $T$ is a pretilting $\Lambda$-module, then $eT$ is a pretilting $B=e\Lambda e$-module.
\end{lemma}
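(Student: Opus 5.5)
The plan is to work with the triple description of left $\Lambda$-modules from Section~6, now with $U=B$. A $\Lambda$-module is then a triple $(X,V,\varphi)$ with $\varphi\colon B\otimes_\C V\to X$, and $eT$ is simply the $B$-component $X$. I will show the two pretilting conditions for $eT$ in turn: finite projective dimension, and vanishing of higher self-extensions. Both reduce to formal properties of the restriction functor $\mathrm{res}=e(-)\cong\Hom_\Lambda(\Lambda e,-)$ and its left adjoint $\iota=\Lambda e\otimes_B-$. This left adjoint is the functor $\iota(X)=(X,0,0)$ of Section~6.

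First I check that $\mathrm{res}$ is exact and sends projectives to projectives. Exactness is clear, since $\mathrm{res}$ is multiplication by an idempotent. For projectives, the indecomposable projective $\Lambda$-modules are summands of $\Lambda e=\iota(B)$ and of $\Lambda f=(B,\C,1_B)$. Their restrictions are $e\Lambda e=B$ and $e\Lambda f=B$, both projective over $B$; this is exactly where regularity ($U=B$ projective) is used. Restricting a finite projective resolution of $T$ therefore gives a finite projective resolution of $eT$, so $\pd_B eT\le\pd_\Lambda T<\infty$.

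Second, for self-orthogonality I use the adjunction $(\iota,\mathrm{res})$. Both $\iota$ and $\mathrm{res}$ are exact, and $\iota$ preserves projectives by the proof of Lemma~\ref{lem:one-point extension-ext-calculus}. Hence the adjunction derives to natural isomorphisms
\[
  \Ext_\Lambda^i(\iota Z,Y)\cong\Ext_B^i(Z,eY)\qquad(i\ge0).
\]
For $T=(X,V,\varphi)$ there is a canonical exact sequence
\[
  0\longrightarrow\iota(eT)\longrightarrow T\longrightarrow S\otimes_\C V\longrightarrow0,
\]
where $S$ is the new simple module. By \eqref{eq:new-simple-presentation} with $U=B$, the sequence $0\to\iota(B)\to\Lambda f\to S\to0$ is a projective resolution, so $\pd_\Lambda S\le1$. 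Applying $\Hom_\Lambda(-,T)$ to the canonical sequence yields an exact segment
\[
  \Ext_\Lambda^i(T,T)\longrightarrow\Ext_\Lambda^i(\iota eT,T)\longrightarrow\Ext_\Lambda^{i+1}(S,T)^{\dim V}.
\]
For $i\ge1$ the right-hand term vanishes, so $\Ext_\Lambda^i(T,T)$ surjects onto $\Ext_\Lambda^i(\iota eT,T)\cong\Ext_B^i(eT,eT)$. The left-hand term is zero because $T$ is self-orthogonal, and hence $\Ext_B^i(eT,eT)=0$ for all $i\ge1$.

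The main point requiring care is the derived adjunction isomorphism: I would verify it by applying $\iota$ to a projective $B$-resolution of $Z$ and then $\Hom_\Lambda(-,Y)\cong\Hom_B(-,eY)$ termwise. The other point is the vanishing of $\Ext^{\ge2}(S,-)$, which is exactly where the hypothesis $U=B$ projective enters. As a cross-check, these homological properties of restriction are also those recorded in \cite[Section~2]{AsadollahiPadashnikSadeghiTreffinger2024}.
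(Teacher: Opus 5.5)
Your proof is correct. It follows the paper's overall structure: restriction $e(-)$ is exact and preserves projectives, which gives finite projective dimension, and then an Ext comparison gives self-orthogonality. The difference lies in how each step is justified.

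For the first step, you check directly that $e\Lambda e\cong B\cong e\Lambda f$. The paper instead cites \cite[Section~2]{AsadollahiPadashnikSadeghiTreffinger2024} for the fact that restriction preserves projectives.

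For self-orthogonality, the paper treats \cite[Proposition~2.3(3), (4)]{AsadollahiPadashnikSadeghiTreffinger2024} as a black box: a natural epimorphism $\Ext^1_\Lambda(X,Y)\to\Ext^1_B(eX,eY)$ and isomorphisms in degrees $r\geq 2$. You derive the needed vanishing yourself from three ingredients:
\begin{enumerate}
\item the derived adjunction $\Ext^i_\Lambda(\iota Z,Y)\cong\Ext^i_B(Z,eY)$, which holds because $\iota=\Lambda e\otimes_B-$ is exact and preserves projectives;
\item the canonical sequence $0\to\iota(eT)\to T\to S^{\dim V}\to 0$;
\item the bound $\pd_\Lambda S\le 1$, coming from $0\to\iota(B)\to\Lambda f\to S\to 0$.
\end{enumerate}

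Your route is self-contained and shows that projectivity of $U=B$ is used in exactly two places: $e\Lambda f$ is projective, and $\pd_\Lambda S\le1$. The same long exact sequence, applied to arbitrary $X,Y$ and using also $\Ext^i_\Lambda(S^{\dim V},Y)=0$ for $i\ge 2$, recovers the full cited comparison: surjectivity in degree one and isomorphism in degrees $\ge 2$. Only the surjectivity part is needed here. The paper's version is shorter but leans on the external reference for both properties of the restriction functor.
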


\begin{proof}
The algebra $\Lambda$ is the one-point extension of $B$ by the projective
$B$-module $B$. Consider the restriction functor
\[
\mathcal R=\Hom_{\Lambda}(\Lambda e,-):
\Lambda\text{-}\mathrm{mod}
\longrightarrow
B\text{-}\mathrm{mod}.
\]
Evaluation at $e$ gives a natural isomorphism
$\mathcal R(X)\cong eX$ for every $\Lambda$-module $X$.

The functor $\mathcal R$ is exact and preserves projective modules; see
\cite[Section~2]{AsadollahiPadashnikSadeghiTreffinger2024}. Therefore
$\pd_B(eT)\leq\pd_{\Lambda}T<\infty$.

Moreover, by
\cite[Proposition~2.3(3), (4)]{AsadollahiPadashnikSadeghiTreffinger2024},
for any two $\Lambda$-modules $X$ and $Y$ there is a natural epimorphism
$\Ext_{\Lambda}^1(X,Y)\twoheadrightarrow\Ext_B^1(eX,eY)$, and, for every
$r\geq2$, there is a natural isomorphism
$\Ext_{\Lambda}^r(X,Y)\cong\Ext_B^r(eX,eY)$.
Applying these statements to $X=Y=T$ and using
$\Ext_{\Lambda}^r(T,T)=0$ for $r>0$, we obtain
$\Ext_B^r(eT,eT)=0$ for $r>0$.
Hence $eT$ is a pretilting $B$-module.
\end{proof}

The preceding lemma, together with the generation criterion, gives the
generalized version of the usual restriction result for tilting modules
that we need.
\begin{corollary}
\label{cor:regular-one-point-tilting-restriction}
With the notation of Lemma~\ref{lem:regular-one-point-restriction}, if
$T$ is a tilting $\Lambda$-module, then $eT$ is a tilting $B$-module.
\end{corollary}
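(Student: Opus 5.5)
By Lemma~\ref{lem:regular-one-point-restriction}, $eT$ is a pretilting $B$-module. By Lemma~\ref{lem:tilting-generation} applied over $B$, it therefore suffices to show that
\[
  B\in\thick_{\Db(B\text{-}\mathrm{mod})}(eT).
\]
We transport the tilting property of $T$ along the restriction functor $\mathcal R\cong e(-)$.

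First, $T$ is tilting, so Lemma~\ref{lem:tilting-generation} over $\Lambda$ gives $\Lambda\in\thick_{\Db(\Lambda\text{-}\mathrm{mod})}(T)$. One could work directly with the coresolution $0\to\Lambda\to T_0\to\cdots\to T_s\to0$ from (T3), which avoids triangulated language altogether. Since $\mathcal R$ is exact, applying it yields an exact sequence
\[
  0\longrightarrow e\Lambda\longrightarrow eT_0\longrightarrow\cdots\longrightarrow eT_s\longrightarrow0
\]
of $B$-modules, and each $eT_i$ lies in $\add(eT)$. Next we identify $e\Lambda$ as a left $B$-module. We have
\[
  e\Lambda=\begin{pmatrix}B&B\\0&0\end{pmatrix},
\]
so as a left $B=e\Lambda e$-module, $e\Lambda\cong B\oplus B$. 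Hence $B$ is a direct summand of $e\Lambda$.

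Now split the exact sequence into short exact sequences and pass to triangles in $\Db(B\text{-}\mathrm{mod})$. This gives $e\Lambda\in\thick(eT)$, and since thick subcategories are closed under summands, also $B\in\thick(eT)$. Combined with the pretilting property from Lemma~\ref{lem:regular-one-point-restriction}, Lemma~\ref{lem:tilting-generation} shows that $eT$ is tilting.

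The only point requiring care is that $\mathcal R$ really sends the $\Lambda$-coresolution to an exact sequence of $B$-modules with terms in $\add(eT)$, and that $B\mid e\Lambda$. Both follow from the exactness of multiplication by the idempotent $e$, which commutes with finite direct sums, together with the explicit matrix description of $e\Lambda$. I expect no real obstacle beyond this. If one prefers, the derived-category step can be replaced by noting that the sequence above directly witnesses (T3) for the module $eT\oplus$ (nothing needed), after adding $B$ as a summand of $e\Lambda$ via a pushout argument. The thick-subcategory route through Lemma~\ref{lem:tilting-generation} is cleaner, though, and it is the one I would follow.
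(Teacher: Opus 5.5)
Your proposal is correct and follows the paper's proof essentially step for step. The paper also restricts the (T3) coresolution of $\Lambda$ along the exact functor $e(-)$, uses $e\Lambda\cong B\oplus B$ to get $B\in\thick(eT)$, and concludes with Lemma~\ref{lem:tilting-generation}. Two pieces of your write-up can be dropped: the preliminary appeal to $\Lambda\in\thick(T)$, which you never use, and the garbled closing aside about a pushout argument.
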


\begin{proof}
By Lemma~\ref{lem:regular-one-point-restriction}, the $B$-module $eT$ is
pretilting. Let
\[
0\longrightarrow\Lambda\longrightarrow
T_0\longrightarrow T_1\longrightarrow\cdots
\longrightarrow T_s\longrightarrow0,
\qquad
T_i\in\add(T),
\]
be a tilting coresolution of $\Lambda$.

Applying the exact restriction functor $e(-)$ gives an exact sequence
\[
0\longrightarrow e\Lambda\longrightarrow
eT_0\longrightarrow eT_1\longrightarrow\cdots
\longrightarrow eT_s\longrightarrow0,
\qquad
eT_i\in\add(eT).
\]
As a $B$-module,
$e\Lambda=e\Lambda e\oplus e\Lambda f\cong B\oplus B$, where
\[
f=
\begin{pmatrix}
0&0\\
0&1
\end{pmatrix}.
\]
Consequently,
$B\in\thick_{\Db(B\text{-}\mathrm{mod})}(eT)$.
Lemma~\ref{lem:tilting-generation} now shows that $eT$ is a tilting $B$-module.
\end{proof}

\begin{remark}
The classical case of the preceding corollary is
\cite[Corollary~4.3]{AsadollahiPadashnikSadeghiTreffinger2024}; see also
\cite[Proposition~4.1]{AssemHappelTrepode2007}.  Using the higher extension
comparison above, the same restriction property holds for the generalized
tilting modules considered here.
\end{remark}

\begin{proof}[Proof of Theorem~\ref{thm:Cnminusone}]
Let $B$ and $M$ be as in Theorem~\ref{thm:Cn}, and put
\[
  \Lambda=
  \begin{pmatrix}
    B & B\\
    0 & \C
  \end{pmatrix},
  \qquad
  L=\iota(M)=(M,0,0),
\]
where
\[
\iota:B\text{-}\mathrm{mod}
\longrightarrow
\Lambda\text{-}\mathrm{mod},
\qquad
X\longmapsto(X,0,0).
\]

We first record the elementary properties of $\Lambda$. Its radical is
\[
  \rad\Lambda=
  \begin{pmatrix}
    \rad B & B\\
    0 & 0
  \end{pmatrix},
\]
and hence
$\Lambda/\rad\Lambda\cong(B/\rad B)\times\C$.
Since $B$ is basic, $\Lambda$ is basic and
$n(\Lambda)=n(B)+1$.

We next prove that $\Lambda$ is connected. Let
\[
  z=
  \begin{pmatrix}
    b & x\\
    0 & c
  \end{pmatrix}
\]
be a central idempotent of $\Lambda$. Commuting $z$ with
\[
f=
\begin{pmatrix}
0&0\\
0&1
\end{pmatrix}
\]
gives $x=0$. For every $y\in B$, commuting $z$ with
\[
  \begin{pmatrix}
    0 & y\\
    0 & 0
  \end{pmatrix}
\]
gives $by=yc$. Taking $y=1_B$, we obtain $b=c1_B$. Since
$c\in\C$ is idempotent, $c=0$ or $c=1$, and hence $z=0$ or
$z=1_{\Lambda}$. Thus $\Lambda$ is connected.

Moreover, $f\Lambda=\C f$, so $I=\Lambda f\Lambda=\Lambda f$.
In particular, $I$ is a projective left ideal. It satisfies
$I^2=I$ and $I(\rad\Lambda)I=0$, and
$\Lambda/I\cong B$.
Thus $I$ is a heredity ideal. Since $B$ is quasi-hereditary, $\Lambda$ is
quasi-hereditary.

We now show that $L$ is pretilting. The functor $\iota$ is exact, fully
faithful, and sends projective $B$-modules to projective $\Lambda$-modules.
Thus \(\pd_{\Lambda}L\leq\pd_BM\).
Conversely, the exact restriction functor $e(-)$ preserves projectives, so
applying it to a projective $\Lambda$-resolution of $L$ gives a projective
$B$-resolution of $M=eL$. Hence \(\pd_BM\leq\pd_{\Lambda}L\),
and consequently \(\pd_{\Lambda}L=\pd_BM<\infty\).

If $P_{\bullet}\to M$ is a projective $B$-resolution, then
$\iota(P_{\bullet})\to L$ is a projective $\Lambda$-resolution. Since
$\iota$ is fully faithful, for every $r\geq0$ we have \(\Ext_{\Lambda}^r(L,L) \cong \Ext_B^r(M,M)\). It follows that $\Ext_{\Lambda}^r(L,L)=0$ for $r>0$.
Thus $L$ is pretilting.

The functor $\iota$ also preserves indecomposability and isomorphism classes. Indeed, \(\operatorname{End}_{\Lambda}(\iota X) \cong \operatorname{End}_B(X)\), and an isomorphism $\iota X\cong\iota Y$ induces an isomorphism
$X\cong Y$. Therefore
\[
  |L|
  =
  |M|
  =
  n(B)
  =
  n(\Lambda)-1.
\]
Hence $L$ is a basic almost-tilting $\Lambda$-module.

Suppose that $L$ were a direct summand of a tilting $\Lambda$-module $T$.
By Corollary~\ref{cor:regular-one-point-tilting-restriction}, $eT$ is a
tilting $B$-module. Since $L$ is a direct summand of $T$, the module $M=eL$ is a direct summand of $eT$. This contradicts Theorem~\ref{thm:Cn}.
Therefore $L$ has no tilting completion.
\end{proof}

\begin{remark}
The module $L$ in Theorem~\ref{thm:Cnminusone} is not faithful.  Since
$M$ is faithful over $B$, the annihilator of $L=\iota(M)$ is precisely
$\Lambda f\Lambda$.  Thus Theorem~\ref{thm:Cnminusone} settles
$\mathrm{(C_{n-1})}$ for connected quasi-hereditary algebras, but it does
not provide a faithful almost-full-rank counterexample.
\end{remark}

\begin{remark}
Let $r=n(B)$ for the algebra in Theorem~\ref{thm:Cn}. For every $q\geq0$, replace $(B,M)$ by $B_q=B\times\C^q$ and $M_q=M\oplus\C^q$. For $q=0$, the assertions below are exactly those of
Theorem~\ref{thm:Cn}. Assume that $q>0$. Since finite direct products
of quasi-hereditary algebras are quasi-hereditary, $B_q$ is
quasi-hereditary. Under the canonical equivalence
\[
  B_q\text{-}\mathrm{mod}
  \simeq
  B\text{-}\mathrm{mod}
  \times
  \C^q\text{-}\mathrm{mod},
\]
projective dimension, extension groups, and the tilting coresolution
condition are all computed componentwise. Since \(n(B_q)=n(B)+q\), the module $M_q$ is a basic full-rank pretilting $B_q$-module.

Moreover, if $M_q$ were a direct summand of a tilting $B_q$-module
$T$, then the projection of $T$ to $B\text{-}\mathrm{mod}$ would be a
tilting $B$-module containing $M$ as a direct summand, contradicting
Theorem~\ref{thm:Cn}. Hence $M_q$ has no tilting completion.

Applying the regular one-point extension above produces a connected
quasi-hereditary algebra of rank $r+q+1$ with a non-completable pretilting
module of rank $r+q$. Consequently, connected counterexamples to
$\mathrm{(C_{n-1})}$ exist for every sufficiently large rank $n$.
\end{remark}

\section*{Acknowledgments}
The first author is supported by the Fundamental Research Funds for the
Central Universities (No.~GK202403003) and the National Natural Science
Foundation of China (Grant No.~12271321).
    
\section*{Statement on AI usage}

We acknowledge the use of AI during the preparation of this manuscript. The overall strategy of the paper was developed by the authors.
ChatGPT assisted in simplifying the proof of Lemma \ref{lem:one-point extension-ext-calculus}, and in verifying that the one-point extension $\Gamma$ in the proof of Theorem \ref{thm:SWC-to-SFC-reduction} is still quasi-hereditary. ChatGPT brought to our attention the related work \cite{LiuFengGao2026}. It was also used for language editing. The resulting arguments were subsequently checked in detail, revised, and integrated into the manuscript by the authors.

The authors have reviewed the manuscript in full and take full responsibility for all mathematical claims, arguments, and conclusions, as well as for any remaining errors.

\end{document}